\documentclass[10pt,a4paper,american]{amsart}


\usepackage{graphicx}%
\usepackage{multirow}%
\usepackage{amsmath,amssymb,amsfonts}%
\usepackage{amsthm}%
\usepackage{mathrsfs}%
\usepackage[title]{appendix}%
\usepackage{xcolor}%
\usepackage{textcomp}%
\usepackage{manyfoot}%
\usepackage{booktabs}%
\usepackage{algorithm}%
\usepackage{algorithmicx}%
\usepackage{algpseudocode}%
\usepackage{listings}%


\makeatletter

\setlength{\parindent}{0pt} 

\numberwithin{equation}{section}
\numberwithin{figure}{section}

\usepackage{babel}
\usepackage{amsfonts}
\usepackage{amsthm}
\usepackage{mathcomp}
\usepackage{dsfont}
\usepackage{latexsym}

\def\Xint#1{\mathchoice
    {\XXint\displaystyle\textstyle{#1}}%
    {\XXint\textstyle\scriptstyle{#1}}%
    {\XXint\scriptstyle\scriptscriptstyle{#1}}%
    {\XXint\scriptscriptstyle\scriptscriptstyle{#1}}%
    \!\int}


\newtheorem{theo}{Theorem}[section]

\newtheorem{prop}[theo]{Proposition}
\newtheorem{lem}[theo]{Lemma}
\newtheorem{definition}[theo]{Definition}

\theoremstyle{definition}

\newtheorem*{lem*}{Lemma}
\newtheorem{rem}{Remark}

\newtheorem*{cor*}{Corollary}
\newtheorem*{theo*}{Theorem}


\newcommand{\R}{\ensuremath{\mathbb{R}}}

\newcommand{\BIGOP}[1]{\mathop{\mathchoice%
{\raise-0.22em\hbox{\huge $#1$}}%
{\raise-0.05em\hbox{\Large $#1$}}{\hbox{\large $#1$}}{#1}}}


\def\mwint_#1{\mathchoice
 {\mathop{\vrule width 6pt height 3 pt depth -2.5pt
        \kern -8.5pt \intop}\nolimits_{\kern -3pt #1}}%
 {\mathop{\vrule width 6pt height 3 pt depth -2.6pt
                  \kern -6pt \intop}\nolimits_{#1}}%
 {\mathop{\vrule width 6pt height 3 pt depth -2.6pt
                  \kern -6pt \intop}\nolimits_{#1}}%
 {\mathop{\vrule width 6pt height 3 pt depth -2.6pt
                  \kern -6pt \intop}\nolimits_{#1}}}

\renewcommand{\epsilon}{\varepsilon}


\numberwithin{equation}{section}

\def\Xint#1{\mathchoice
    {\XXint\displaystyle\textstyle{#1}}%
    {\XXint\textstyle\scriptstyle{#1}}%
    {\XXint\scriptstyle\scriptscriptstyle{#1}}%
    {\XXint\scriptscriptstyle\scriptscriptstyle{#1}}%
    \!\int}
\def\XXint#1#2#3{\setbox0=\hbox{$#1{#2#3}{\int}$}
    \vcenter{\hbox{$#2#3$}}\kern-0.5\wd0}

\def\dashint{\Xint{\raise4pt\hbox to7pt{\hrulefill}}}

\def\XXiint#1#2#3{\setbox0=\hbox{$#1{#2#3}{\iint}$}
    \vcenter{\hbox{$#2#3$}}\kern-0.5\wd0}

\allowdisplaybreaks

\makeatother

 \author[M. Eleuteri]{Michela Eleuteri}
 \address{Michela Eleuteri\\
 Dipartimento di Scienze Fisiche, Informatiche e Matematiche,
 Universit\`a degli Studi di Modena e Reggio Emilia, via Campi 213/b, 41125 -
 Modena, Italy
 }
 \email{michela.eleuteri@unimore.it}

 \author[S. Perrotta]{Stefania Perrotta}
 \address{Stefania Perrotta\\
 Dipartimento di Scienze Fisiche, Informatiche e Matematiche,
 Universit\`a degli Studi di Modena e Reggio Emilia, via Campi 213/b, 41125 -
 Modena, Italy
 }
 \email{stefania.perrotta@unimore.it}

 \author[G. Treu]{Giulia Treu}
 \address{Giulia Treu\\
 Dipartimento di Matematica ``Tullio Levi-Civita'', Universit\`a di Padova\\
 Via Trieste 63, 35121 Padova, Italy }
\email{giulia.treu@unipd.it}

\begin{document}
\title[Local Lipschitz continuity with lower order terms]{Local Lipschitz continuity for energy integrals with slow
growth and lower order terms}

%
%
%
%
%
%
%
%
%
%

\begin{abstract}
{We consider integral functionals with slow growth and explicit dependence on $u$ of the lagrangian; this includes many relevant examples,  as, for instance, in elastoplastic torsion problems or in image restoration problems. Our aim is to prove that the local minimizers are locally Lipschitz continuous. The proof makes use of recent results concerning the  Bounded Slope Conditions.}
\end{abstract}

\subjclass[MSC Classification]{35B45, 35B51, 35B65, 35J60, 35J70, 49J40, 49J45}
\keywords{Elliptic equations, local minimizers, local Lipschitz continuity, bounded slope condition, general growth}

\maketitle


\section{Introduction and statement of the main result\label{Section
Introduction}}

Nowadays there is renewed interest regarding Lipschitz regularity results for local minimizers of integral functionals or weak solutions to a class of nonlinear
elliptic partial differential equations in divergence form with non-standard growth conditions, see for example the recent contributions \cite{BM, BS2022, CMMPdN, DFM2021, EMM, EMMP, EPdN, M2021}. Our paper fits into this research line, i.e. with the present paper our aim is to prove local Lipschitz regularity results for integral functionals of the type
\begin{equation}
\label{funz-modello}
\mathcal{F}(u)=\int_{\Omega} f(Du) + g(x, u) \, dx.
\end{equation} We emphasize our interest in dealing with the explicit dependence on $u$ of the lagrangian. This includes many significant functionals involved,  for instance, in elastoplastic torsion problems or in image restoration problems (see \cite{GT} for explicit examples).  
 Moreover this class of functionals has been already considered in literature, see for instance \cite{CCG, DFM2021} concerning
regularity of local minimizers of a class of integrals of the Calculus of Variations, see also \cite{DFM2022} where the functionals considered do not necessarily
satisfy the Euler–Lagrange equation. On the other hand, in \cite{M2022}  the motivation to introduce an explicit $u$-dependence on the coefficients in the differential equation comes from several recent studies on nonlinear elliptic and parabolic equations with general growth
conditions.

In this work we have been inspired by the papers \cite{mar96, EMMP} dealing with functionals depending only on $Du$  with general growth assumptions, respectively fast and slow. In both papers the authors prove suitable a priori estimates and then apply classical results on the Bounded Slope Condition to get the local Lipschitz continuity. We generalize these techniques in order to include also the lower order terms. To this aim we need to exploit some recent results that extend the classical ones on the Bounded Slope Conditions (BSC) to our type of functionals \cite{FT, GT}.

\bigskip

Our aim is to study the regularity of local minimizers of the functional \eqref{funz-modello} and, as it is commonly used in literature, we say that $u\in W_{loc}^{1,1}(\Omega )$ is a \textit{local minimizer} of the
integral functional $\mathcal{F}$ in \eqref{funz-modello} if $f(Du) + g(\cdot,u) \in
L_{loc}^{1}(\Omega )$ and 
\begin{equation*}
\int_{\Omega ^{\prime }}f(Du)+g(x,u)\,dx\leq \int_{\Omega ^{\prime }}f(Du+D\varphi
)+g(x,u+\varphi)\,dx
\end{equation*}%
for every open set $\Omega ^{\prime }$, $\overline{\Omega ^{\prime }}\subset
\Omega $ and for every $\varphi \in W_{0}^{1,1}(\Omega ^{\prime })$. 

\bigskip

In order to state the main result of the paper, we need to introduce the  assumptions on the lagrangian. Let $f:\mathbb{R}^{n}\rightarrow \lbrack 0\,,+\infty )$ be a
convex function in $\mathcal{C}(\mathbb{R}^{n})\cap \mathcal{C}^{2}(\mathbb{R%
}^{n}\backslash B_{t_{0}}(0))$ for some $t_{0}\geq 0$, satisfying the
following growth conditions: there exist two continuous functions $%
h_{1},h_{2}:[t_{0}\,,+\infty )\rightarrow (0\,,+\infty )$ and positive
constants $C_{1}$, $C_{2}$, $\alpha $, $\beta $ and $\mu\in[0\,,1] $ such
that 
\begin{enumerate}
    \item[(F1)] $ \displaystyle h_{1}\left( \left\vert \xi \right\vert \right) \left\vert \lambda
\right\vert ^{2}\leq \sum_{i,j=1}^{n}f_{\xi _{i}\xi _{j}}\left( \xi \right)
\lambda _{i}\lambda _{j}\leq h_{2}\left( \left\vert \xi \right\vert \right)
\left\vert \lambda \right\vert ^{2},\ \forall \lambda ,\xi \in 
\mathbb{R}^{n},\;\left\vert \xi \right\vert {\geq }t_{0}$;
\item[(F2)] $t\mapsto t^{\mu}h_{2}(t)$  is decreasing and $t\mapsto th_{1}(t)$ is
increasing;\\[0.05mm]
\item[(F3)] $\displaystyle \left( h_{2}(t)\right) ^{2/{2^*}}\leq C_{1}t^{2\beta }h_{1}(t)$,\;\;\;%
$\displaystyle \beta <\frac{2}{n},\;\forall \;t\geq t_{0}$;\\[0.08mm]
\item[(F4)] $h_{2}(\left\vert \xi \right\vert )|\xi |^{2}\leq C_{2}\,[1+f(\xi
)]^{\alpha },\;\;\alpha >1,\;\forall  \xi \in \mathbb{R}^{n}
,\;\left\vert \xi \right\vert \geq t_{0}$
\end{enumerate}

\vspace{2mm}

\noindent where, in (F3), $2^* = \frac{2n}{n-2}$ if $n \ge 3$ while in the case $n=2$, it must be replaced with any fixed positive number greater than $\frac{2}{1 - \beta}$. 

Moreover we assume that $g:\Omega\times\R\rightarrow\R$ is a Carathéodory function satisfying
\begin{enumerate}
    \item[(G1)] there exists $L$ such that $|g(x,\eta_1)-g(x,\eta_2)|\le L|\eta_1-\eta_2|$ for a.e. $x$ in $\Omega$ and every $\eta_1,\eta_2\in\R$; \vskip0.1cm
    \item[(G2)] $g(\cdot, 0) \in L^1_{loc}(\Omega);$ 
    \vskip0.1cm
    \item[(G3)] $u\mapsto g(x,u)$ is convex for a.e. $x\in\Omega$;
    \vskip0.1cm
    \item[(G4)] there exists
a positive constant $K$ such that for a.e.  $x,y\in\R^n,\, \forall u,v\in\R$
\[
v\ge u+K|y-x|
\Rightarrow g^+_v(y,v)\ge g^+_v(x,u)
\]
where $g^+_v$ denotes the right derivative of
$g$ with respect to the second variable. 
\end{enumerate}

Condition (F2) means that we are considering {\it slow growth conditions.} Indeed in the model case of $p,q$-growth, the map $t \mapsto t^{\mu} h_2(t)$ is decreasing if and only if $q \le 2.$

It is moreover worth to highlight that we require uniform convexity and growth
assumptions on $f=f(\xi) $ only for large values of $\left\vert
\xi \right\vert $ (\cite{CE86},\cite{EMM},\cite{EMM20}).

\bigskip

We finally point out that the assumptions on $g$ are needed as far as we have to use the results on the Bounded Slope Condition for functionals depending also on $(x,u)$ and we refer to \cite{MTrado, FT, GT} for more details. We will discuss the meaning of this set of assumptions in Section \ref{cinque}.

\bigskip

The main 
result of the paper can be stated as follows. Notice that, here and in the sequel, we denote by $B_R$ a generic ball of radius $R$ compactly contained in $\Omega$ and by $B_{\rho}$ a
ball of radius $\rho < R$ concentric with $B_R.$

\begin{theo}
\label{superlinear} Let $u\in W_{loc}^{1,1}(\Omega)\cap L_{loc}^\infty(\Omega)$ be a local minimizer of the functional \eqref{funz-modello}. Suppose that $f$ satisfies the growth assumptions \textnormal{(F1)}--\textnormal{(F4)}, with the parameters $\alpha $, $\beta $, $\mu $ related by the
condition 
\begin{equation}
{2-\mu -\alpha (n\beta -\mu )}>0\,.  \label{ab}
\end{equation}%
Assume moreover that $g$ fulfills assumptions \textnormal{(G1)}--\textnormal{(G4)}.
\\
Then $u$  is locally Lipschitz continuous in $\Omega $ and  there exists $\bar R>0$ such that for every $0<\rho< R<\bar R$,  there exist two positive constants $C$ and $\kappa$ depending on the data of the problem, with $
C $ depending also on $\rho $, $R$  
and $\kappa$, depending also on $\|u\|_{L^{\infty}(B_R)}$, $\|g(\cdot, 0)\|_{L^1(B_R)}$ and $|B_R|,$ 
 such that 
\begin{equation}
\Vert Du\Vert _{L^{\infty }(B_{\rho }\,;\mathbb{R}^{n})}\leq \,C\,\left\{ 
\frac{1}{(R-\rho )^{n}} \left (\int_{B_{R}} f(Du) + g(x,u) \,dx+\kappa \right )\right\} ^{\theta }
\label{infinity nu}
\end{equation}%
where $\theta =\frac{(2-\mu )\alpha }{2-\mu -\alpha (n\beta -\mu )}$. 
\end{theo}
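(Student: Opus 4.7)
I follow the two-step strategy developed in \cite{mar96, EMMP} for slow-growth functionals depending only on $Du$, adapting it to handle the lower-order term $g(x,u)$. The general idea is: (i) construct an approximating sequence $\{v_\varepsilon\}$ of smoother minimizers for which the Euler--Lagrange equation can be differentiated; (ii) prove a uniform Caccioppoli--Moser a priori $L^\infty$-bound for $Dv_\varepsilon$ whose exponent is exactly $\theta$ under \eqref{ab}; and (iii) pass to the limit by lower semicontinuity. The novelty compared to \cite{EMMP} is that the BSC argument giving Lipschitz regularity of $v_\varepsilon$ must now come from the extended BSC for functionals with $u$-dependence established in \cite{FT, GT}; this is the role of conditions (G1)--(G4).

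\emph{Approximation.} Fix $B_R\Subset\Omega$ and approximate the trace $u|_{\partial B_R}$ by a sequence $u_\varepsilon$ satisfying the classical Bounded Slope Condition (for instance via convolution followed by affine interpolation, as in \cite{mar96}). Simultaneously regularize $f$ by $f_\varepsilon$ that inherits (F1)--(F4) with uniform constants on bounded sets of $\mathbb{R}^n$ but enjoys standard $p$-growth for some large $p$, and let $v_\varepsilon$ minimize $\int_{B_R}f_\varepsilon(Dw)+g(x,w)\,dx$ among $w$ with $w-u_\varepsilon\in W_0^{1,p}(B_R)$. Existence is standard, and the extended BSC of \cite{FT, GT} combined with (G1)--(G4) yields $v_\varepsilon\in W^{1,\infty}(B_R)$; the uniform convexity from (F1) then upgrades to $v_\varepsilon\in W^{2,2}_{loc}(B_R)$.

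\emph{A priori estimate.} On $\{|Dv_\varepsilon|\ge t_0\}$, I differentiate the Euler--Lagrange equation in $x_s$, test against $\eta^2\,\Phi(|Dv_\varepsilon|)\,\partial_s v_\varepsilon$ with $\eta$ a cutoff between $B_\rho$ and $B_R$ and $\Phi$ an increasing truncation, and sum in $s$. Using (F1) on the second-order term on the left, (F2)--(F4) on the right, and absorbing the $g$-contribution by (G1) (which by Lipschitzianity of $g$ in $u$ produces a term linear in $|Dv_\varepsilon|$, absorbable by Young's inequality), I obtain a Caccioppoli-type inequality of the form
\[
\int h_1(|Dv_\varepsilon|)|D^2 v_\varepsilon|^2\,\Phi(|Dv_\varepsilon|)\,\eta^2\,dx\le C\int\bigl(h_2(|Dv_\varepsilon|)|Dv_\varepsilon|^2+L|Dv_\varepsilon|\bigr)\Phi(|Dv_\varepsilon|)|D\eta|^2\,dx.
\]
Combining with the Sobolev inequality with exponent $2^*$ from (F3) gives a reverse-Hölder-type inequality on dyadic subballs of $B_R$. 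A Moser iteration, whose single step is controlled by the ratio $h_2/h_1$ through (F3) and by (F2) (to move $h_2$ factors through the iteration) and (F4) (to re-enter $f(Dv_\varepsilon)$), terminates and produces the stated $L^\infty$-bound with $\theta=(2-\mu)\alpha/[2-\mu-\alpha(n\beta-\mu)]$, uniformly in $\varepsilon$, precisely when \eqref{ab} guarantees that the iterated exponent stays below the critical threshold.

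\emph{Limit and main difficulty.} Minimality of $v_\varepsilon$ together with $f_\varepsilon\to f$ ensures that the right-hand side of the a priori estimate is controlled by the energy of $u$ plus a vanishing error. The uniform Lipschitz bound produces a weakly-$*$ convergent subsequence of $Dv_\varepsilon$ in $L^\infty_{loc}(B_R)$ whose limit is identified with $Du$, and the $L^\infty$-bound then transfers to $u$. I expect the main obstacle to lie in the Caccioppoli step: after differentiating in $x$, the contributions coming from $g$ are genuinely of the same order as the principal part, and ensuring they are absorbable without worsening the exponent $\theta$ relies crucially on (G1), with (G3)--(G4) needed upstream for the extended BSC to apply. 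A subsidiary difficulty is designing the regularization pair $(f_\varepsilon,u_\varepsilon)$ so that standard-growth existence and BSC regularity coexist with the uniform inheritance of (F1)--(F4) on bounded sets.
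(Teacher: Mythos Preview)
Your three-step architecture matches the paper, and the Caccioppoli/Moser sketch for the a priori estimate is essentially the content of Theorem~\ref{stimapriori}. There are, however, two genuine gaps. The first is in the approximation: an $f_\varepsilon$ with ``standard $p$-growth for some large $p$'' cannot satisfy (F2) globally (the map $t\mapsto t^\mu h_2(t)$ must be decreasing, which fails for any $p>2$), and if (F1)--(F4) hold only ``on bounded sets'' then the constants in the a priori estimate depend on where $\|Dv_\varepsilon\|_\infty$ lands---circular. The paper instead builds $f_k\in\mathcal C^2(\mathbb R^n)$ that are locally uniformly convex, satisfy (F1)--(F4) for \emph{all} $|\xi|\ge t_0$ with constants independent of $k$, and obey $f-1\le f_k\le f+|\xi|+1$; existence of $v_{k,\varepsilon}$ then comes from superlinearity, and Lipschitz regularity comes entirely from the BSC barriers of \cite{FT,GT} (where (G3)--(G4) enter), not from $p$-growth regularity theory.

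The second gap is more serious: you cannot ``identify the limit with $Du$.'' After a diagonal extraction in $k$ and $\varepsilon$, the weak-$*$ limit is some $\bar v\in u+W^{1,1}_0(B_R)$, and lower semicontinuity plus minimality of the $v_{k,\varepsilon}$ only show that $\bar v$ is \emph{another} minimizer. Since $f$ need not be strictly convex for $|\xi|\le t_0$, uniqueness fails and $\bar v\ne u$ in general. The paper closes this by using strict convexity of $f$ for $|\xi|>t_0$ to show that the set $\{|(Du+D\bar v)/2|>t_0\}\cap\{Du\ne D\bar v\}$ has measure zero, whence $\|Du\|_{L^\infty(B_\rho)}\le 2t_0+\|D\bar v\|_{L^\infty(B_\rho)}$. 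A related omission: to control the right-hand side of the a priori estimate by $\int_{B_R}f(Du)+g(x,u)\,dx$ you must bound $\|g(\cdot,v_{k,\varepsilon})\|_{L^1(B_R)}$ uniformly, which requires a uniform $L^\infty$ bound on the approximating minimizers themselves; the paper obtains this from a separate comparison lemma (Lemma~\ref{Lemmabound}, via the polar $f^*$), and this is exactly where the constant $\kappa$ and its stated dependence on $\|u\|_{L^\infty(B_R)}$ and $\|g(\cdot,0)\|_{L^1(B_R)}$ originate.
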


\vspace{2mm}

The paper is organized as follows. In Section \ref{due} are listed some notations and preliminary results. In particular we notice the use of the polar function of $f$ to construct suitable barriers for the minimizers. These barriers have been introduced by Cellina in \cite{C3} and satisfy a Comparison Principle \cite{FT}.  Section \ref{tre} contains the proof of the a priori estimate, namely Theorem \ref{stimapriori}; in this case we notice that condition \eqref{ab} is the same of \cite[Theorem 2.1]{EMMP} and it is not affected by the presence of the lower order term $g$. Section \ref{quattro} is devoted to the proof of Theorem \ref{superlinear}, which is divided in three steps: approximation, estimates for the boundedness of the gradient and passage to the limit. In this last part the crucial tools are the results concerning the (BSC) that still hold for a more general functional explicitly depending on $x$ and $u$. Finally Section \ref{cinque} aims to present some additional results related to Theorem \ref{superlinear}. In Theorem \ref{uLocBounded} we extend the main result to the case in which assumption (G1) is replaced by a local Lipschitz condition in the second variable. In Theorem \ref{uBounded} we present a regularity result for a Dirichlet problem in which it is not necessary to assume the a priori boundedness of the local minimizer. 
Finally,  Theorem \ref{radial-symmetric}  shows how, in the case the lagrangian depends on the modulus of the gradient, we allow slower growth conditions, more precisely condition \eqref{ab} is always satisfied. 

\bigskip

We conclude this introduction with some remarks showing a comparison between our results and the classical non-standard growth conditions.
\\
We begin with the widely studied case of $(p,q)-$growth, namely 
\[
h_1(t) \sim t^{p-2} \qquad h_2(t) \sim t^{q-2};
\]
in this case we obtain the usual gap condition
\[
\frac{q}{p} < 1 + \frac{2}{n}.
\]
On the other hand, if $f$ is strongly anisotropic as in \cite[Examples 3.3, 4.1 and 4.4]{EMMP}, then the regularity holds with the same conditions obtained for functionals depending only on the gradient

Moreover, in the radially symmetric case, we 
cover also the case of very slow growth such as
 $f(\xi )=(|\xi |+1)L_{k}(|\xi |)$, $%
k\in \mathbb{N}$, $L_{k}$ defined inductively as
\begin{equation*}
L_{1}\left( t\right) =\log \left( 1+t\right) ,\;\;\;\;\;L_{k+1}\left(
t\right) =\log \left( 1+L_{k}\left( t\right) \right),
\end{equation*}%
(see also \cite{FM00}). As a final remark we notice that, still in the radially symmetric case, for the $(p,q)$-growth, we obtain, as in \cite{MP}, the local Lipschitz regularity of the minimizers if 
\[
\frac{q}{p} < \frac{n}{n-2}. 
\]

\section{Notations and preliminary results}

\label{due}

Let $f:\R^n\rightarrow[0,+\infty)$ be a convex function. we denote $\partial f(x)$ the subdifferential of $f$ at the point $x$.
We indicate by $f^*$ the polar, or Fenchel tranform of $f$, see \cite{RW}, defined by
$$f^*(x):=\sup_{\xi\in \R^n}\{x\cdot\xi-f(\xi)\}, \quad \forall x\in \R^n.$$
We list here some properties that will be useful in the rest of the paper.
\begin{prop}
 \label{polare}
Let $f:\R^n\rightarrow[0,+\infty)$ be a convex function. Therefore:
 \begin{enumerate}
    \item[i)] if $f$ is superlinear,  then $f^*(x)\in \R$ for every $x\in\R^n$;
    \item[ii)] if $f$ is such that $f(\xi)\in\R$ for every $\xi \in\R^n$,  then $f^*(x)$ is superlinear;
    \item[iii)] $\partial f(\xi)=\left(\partial f^*\right)^{-1}(\xi)$ for every $\xi\in \R^n$;
    \item[iv)] if $f$ is superlinear $\partial f^*(x)=\left(\partial f\right)^{-1}(x)$ for every $x\in \R^n$; 
    \item[v)] if $f$ is in $\mathcal{C}(\mathbb{R}^{n})\cap \mathcal{C}^{2}(\mathbb{R%
}^{n}\backslash B_{t_{0}}(0))$ for some $t_{0}\geq 0$, superlinear and satisfies assumption {\rm (F1)}, then there exists $s_0\in\R$, such that $f^*\in\mathcal{C}^2(\R^n\setminus B_{s_0}(0))$. Moreover, for every $x\in\R^n\setminus B_{s_0}(0)$,  $D^2f^*(x)=\left(D^2f\right)^{-1}(Df^*(x))$ and 
\begin{equation}\label{ellitticitapolare}
\displaystyle\frac{1}{h_{2}\left( \left\vert Df^*(x) \right\vert \right) }\left\vert \lambda
\right\vert ^{2}\leq \sum_{i,j=1}^{n}f^*_{x _{i}x _{j}}\left( x \right)
\lambda _{i}\lambda _{j}\leq \frac{1}{h_{1}\left( \left\vert Df^*(x) \right\vert \right)}
\left\vert \lambda \right\vert ^{2},\ \forall \lambda ,x \in 
\mathbb{R}^{n},\;\left\vert x \right\vert {\geq }s_{0}.   
\end{equation}
\end{enumerate}   
\end{prop}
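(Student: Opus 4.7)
The plan is to treat the five items essentially independently, using standard convex analysis (Fenchel--Young duality, biconjugation) for (i)--(iv) and the inverse function theorem for (v). For (i), superlinearity gives $x\cdot\xi-f(\xi)\le|x||\xi|-f(\xi)\to-\infty$ as $|\xi|\to\infty$, so the supremum defining $f^*(x)$ is finite. For (ii), since $f$ is convex and finite on $\R^n$ it is continuous, hence bounded on each sphere; testing the sup defining $f^*(x)$ at $\xi=Mx/|x|$ gives $f^*(x)\ge M|x|-\sup_{|\eta|=M}f(\eta)$ for every $M>0$, and dividing by $|x|$ and letting $|x|\to\infty$ yields superlinearity of $f^*$.

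For (iii) and (iv), I would invoke the Fenchel--Young identity
\[
y\in\partial f(\xi)\iff f(\xi)+f^*(y)=\xi\cdot y,
\]
which is symmetric in the pair $(f,f^*)$. Reading the right-hand equality as ``$\xi\in\partial f^*(y)$'' gives (iii) directly. For (iv), superlinearity of $f$ together with convexity ensures $f^{**}=f$, so the same symmetry applies after swapping the roles of $f$ and $f^*$.

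Item (v) is the main point, which I would split in three steps. First, superlinearity together with convexity of $f$ forces $|Df(\xi)|\to\infty$ as $|\xi|\to\infty$, so there exists $s_0\ge 0$ such that $(Df)^{-1}(\R^n\setminus\overline{B_{s_0}(0)})\subset\R^n\setminus\overline{B_{t_0}(0)}$. Second, on $\R^n\setminus\overline{B_{t_0}(0)}$ the Hessian $D^2f$ is positive definite by (F1); combined with the global injectivity of $Df$ coming from (iv), the inverse function theorem yields $Df^*=(Df)^{-1}\in C^1$ on $\R^n\setminus\overline{B_{s_0}(0)}$, hence $f^*\in C^2$ there. Third, differentiating the identity $Df^*(Df(\xi))=\xi$ produces $D^2f^*(x)=\bigl(D^2f(Df^*(x))\bigr)^{-1}$, and inverting the two-sided matrix bound in (F1),
\[
h_2(|\xi|)^{-1}I\le (D^2f(\xi))^{-1}\le h_1(|\xi|)^{-1}I,
\]
evaluated at $\xi=Df^*(x)$, gives exactly \eqref{ellitticitapolare}.

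The main obstacle is the threshold argument of the first step: one needs $s_0$ large enough that every $x$ with $|x|>s_0$ arises as $Df(\xi)$ with some $|\xi|>t_0$, so that the $C^2$ regularity of $f$ is genuinely available when differentiating; this combines superlinearity with the global monotone behaviour of the gradient of a convex function. The subsequent matrix inversion and the ellipticity estimate are routine once the regularity of $f^*$ is established on the correct set.
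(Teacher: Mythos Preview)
The paper's own proof is by citation: it invokes \cite[Lemma~3.1]{GT} for (i)--(ii), \cite[Theorem~11.3]{RW} for (iii)--(iv), and \cite[Theorem~13.21]{RW} for (v). Your argument for (i)--(iv) is correct and simply makes the standard Fenchel--Young reasoning behind those references explicit.

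In (v), however, Step~1 contains a non sequitur. From $|Df(\xi)|\to\infty$ as $|\xi|\to\infty$ you obtain only that $(Df)^{-1}$ maps bounded sets to bounded sets; the inclusion $(Df)^{-1}\bigl(\R^n\setminus\overline{B_{s_0}}\bigr)\subset\R^n\setminus\overline{B_{t_0}}$ that you actually claim is equivalent to $\partial f\bigl(\overline{B_{t_0}}\bigr)\subset\overline{B_{s_0}}$, which is the opposite direction. The latter does hold, but the reason is that a finite convex function on $\R^n$ is locally Lipschitz, so $\partial f$ is bounded on compact sets; superlinearity plays no role here (it is needed only to ensure $(\partial f)^{-1}(x)\ne\emptyset$). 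Note also that $f$ is merely continuous on $B_{t_0}$, so the argument for small $|\xi|$ must be phrased in terms of $\partial f$, not $Df$.

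There is a second, smaller gap in Step~2: item (iv) does not give global injectivity of $Df$, since $f$ need not be strictly convex inside $B_{t_0}$. What one can show is that for $|x|>s_0$ the set $(\partial f)^{-1}(x)$ is a singleton: if $\xi_1\ne\xi_2$ both belong to it, convexity forces $x\in\partial f(\xi)$ for every $\xi$ on the segment $[\xi_1,\xi_2]$, so by the choice of $s_0$ this segment lies entirely in $\R^n\setminus\overline{B_{t_0}}$; but then (F1) contradicts the affinity of $f$ along it. With these two fixes your inverse-function-theorem strategy and the matrix inversion yielding \eqref{ellitticitapolare} go through.
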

\begin{proof}
\ Statements i) and ii) follow from Lemma 3.1 in \cite{GT}, observing that it is not restrictive to assume that $f(0)=0$. Properties iii) and iv) are proved in \cite[Theorem 11.3]{RW} and v) is a consequence of \cite[Theorem 13.21]{RW} and subsequent observation.     
\end{proof}

\medskip

The next two lemmas will play an important role  in the third step of the proof of the main theorem.

\begin{lem}\label{comparison} Let $A$ be an open bounded subset of $\R^n$ with regular boundary. Fix  $\varphi\in L^\infty(A)$ and assume that $f:\R^n\rightarrow\R$ is convex and superlinear. Let $g:\Omega\times\R\rightarrow\R$ be a Carathéodory function satisfying \textnormal{(G1)}.
Let $u$ be a minimizer of 
\begin{equation*}
\int_{A} f(Du) + g(x, u) \, dx
\end{equation*}
in the class $\varphi+W_0^{1,1}(A)$. Then $u$ is essentially bounded on $A$.
\end{lem}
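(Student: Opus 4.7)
The plan is to establish an essential upper bound via a De Giorgi--Stampacchia iteration on truncations of $u$, then to derive the analogous lower bound by applying the same argument to $-u$. Writing $M := \|\varphi\|_{L^\infty(A)}$, for any $k \ge M$ the truncation $u_k := \min(u,k)$ agrees with $\varphi$ on $\partial A$ (since $|\varphi| \le M \le k$ in the trace sense), so $u_k \in \varphi + W_0^{1,1}(A)$ is an admissible competitor. Setting $A_k := \{u > k\}$ and $v_k := (u-k)^+ \in W_0^{1,1}(A)$, minimality of $u$ tested against $u_k$, after cancellation on $\{u \le k\}$ and an application of \textnormal{(G1)}, produces the fundamental inequality
\[
\int_{A_k} \bigl[f(Du) - f(0)\bigr]\,\dx \;\le\; L \int_A v_k\,\dx.
\]

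Next I would exploit the superlinearity of $f$ to convert the left-hand side into a bound on $\int_{A_k} |Du|\,\dx$: for every $\delta > 0$ there exists $R_\delta$ with $f(\xi) - f(0) \ge \delta|\xi|$ whenever $|\xi| \ge R_\delta$, so splitting $A_k$ according to the size of $|Du|$ yields
\[
\int_{A_k} |Du|\,\dx \;\le\; R_\delta |A_k| + \frac{L}{\delta}\int_A v_k\,\dx.
\]
A Sobolev--Poincar\'e inequality applied to $v_k$ combined with H\"older gives $\int_A v_k\,\dx \le C_{n,A}\,|A_k|^{1/n}\int_{A_k}|Du|\,\dx$. Choosing $\delta$ sufficiently large (for instance $\delta = 2L$) and restricting to $k \ge k_0$ large enough that the self-absorbing coefficient $C_{n,A} L \delta^{-1} |A_k|^{1/n}$ is at most $1/2$ --- which is possible because $u \in L^1(A)$ forces $|A_k| \to 0$ --- one concludes
\[
\int_A v_k\,\dx \;\le\; C\,|A_k|^{1 + 1/n}, \qquad k \ge k_0.
\]

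The final step is the classical Stampacchia iteration: the elementary bound $(h-k)|A_h| \le \int_{A_k}(u-k)\,\dx$ together with the previous estimate gives $|A_h| \le C(h-k)^{-1}|A_k|^{1+1/n}$ for $h > k \ge k_0$, with super-unitary exponent $1 + 1/n > 1$, whence $|A_{k^*}| = 0$ for some finite $k^*$ by Stampacchia's lemma. Applying the same reasoning to $-u$ (with $\max(u,-k)$ as competitor) delivers a symmetric lower bound, so $u \in L^\infty(A)$. The main obstacle is the self-absorption step: the interplay between the Lipschitz constant $L$ of \textnormal{(G1)}, the Sobolev constant on $A$, and the rate at which $f$ is superlinear controls how large $\delta$ and $k_0$ must be chosen; everything else is standard bookkeeping of constants.
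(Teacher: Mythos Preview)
Your argument is correct and is a genuinely different route from the paper's. The paper does not iterate on level sets at all: instead it builds explicit barriers from the Fenchel conjugate~$f^*$, namely
\[
\omega_L(x)=\tfrac{n}{L}\,f^*\!\bigl(\tfrac{L}{n}(x-x_0)\bigr)+\inf_{\partial A}\varphi-\sup_{\partial A}\tfrac{n}{L}\,f^*\!\bigl(\tfrac{L}{n}(x-x_0)\bigr),
\]
and then invokes a comparison principle for minimizers of $\int f(Du)+g(x,u)$ (Theorem~2.4 in \cite{FT,GT}) to conclude $u\ge\omega_L$ pointwise; the upper bound is obtained symmetrically with $-\tfrac{n}{L}f^*(-\tfrac{L}{n}(x-x_0))$. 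Your truncation/Stampacchia argument is more self-contained --- it needs only Sobolev embedding and the Lipschitz condition (G1), not the external comparison theorem --- and it yields the same qualitative conclusion. The paper's approach, on the other hand, produces an $L^\infty$ bound that is visibly monotone in $f^*$, which is exactly what is exploited in the next lemma (Lemma~\ref{Lemmabound}) to get uniformity over a family $f_-\le\tilde f\le f_+$; your bound would also be uniform in that setting (since the superlinearity threshold $R_\delta$ for $\tilde f$ is controlled by that of $f_-$), but this is less immediate.

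One small technical point: in passing from $\int_{A_k\cap\{|Du|\ge R_\delta\}}[f(Du)-f(0)]$ to the full $\int_{A_k}[f(Du)-f(0)]$ you are implicitly using $f(\xi)\ge f(0)$, which need not hold. The fix is trivial --- replace $f(0)$ by $\min f$ (finite by convexity and superlinearity) and absorb the extra $[f(0)-\min f]\,|A_k|$ into the constant in front of $|A_k|$ --- but it should be said.
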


\begin{proof} 
$\ $ We start proving that there exists a constant $K^-$ such that $u(x)\ge K^-$ a.e. on $A$. 
We fix $x_0\in\R^n$  and we consider the function
\begin{equation}
\omega_L(x)= \frac{n}{L}f^*\left (\frac{L}{n}(x-x_0)\right )+\inf_{\partial A} \varphi(x)-\sup_{\partial A} \frac{n}{L}f^*\left (\frac{L}{n}(x-x_0) \right)   
\end{equation}
and we observe that, thanks to Proposition \ref{polare} i), it is well defined for every $x\in\R^n$. Moreover the definition of $\omega_L$ implies that
$$\omega_L(x)\le\varphi(x) \text{ on } \partial A$$ 
so that Theorem 2.4 in \cite{FT}, see also Theorem 2.4 in \cite{GT}, implies that then 
$$u(x)\ge K^-=\inf_A \omega_L(x)=\inf_{A}\frac{n}{L}f^*\left (\frac{L}{n}(x-x_0) \right )+\inf_{\partial A} \varphi(x)-\sup_{\partial A} \frac{n}{L}f^*\left (\frac{L}{n}(x-x_0) \right)$$
a.e. in $A$. The proof of the fact that there exists $K^+$ such that $u(x)\le K^+$ a.e. in $A$ follows in an analogous way using the function
\begin{equation}
\omega_{- L}(x)= -\frac{n}{L}f^*\left (-\frac{L}{n}(x-x_0)\right )+\sup_{\partial A} \varphi(x)-\inf_{\partial A} \left(- \frac{n}{L}f^*\left (-\frac{L}{n}(x-x_0) \right)   \right )
\end{equation}
and choosing
\begin{eqnarray*}
K^+&=&\sup_A \omega_{-L}(x)\\
&=&\sup_{A} \left (-\frac{n}{L}f^*\left(-\frac{L}{n}(x-x_0) \right)\right )+\sup_{\partial A} \varphi(x)-\inf_{\partial A}\left (- \frac{n}{L}f^*\left(-\frac{L}{n}(x-x_0)\right) \right ).
\end{eqnarray*}
\end{proof}

Before stating the next lemma, we need to recall the following classical definition.
\begin{definition}[BSC]\label{defbsc}
The function $\phi$ satisfies the \emph{Bounded Slope Condition} of
rank $m\ge 0$ if for every $\gamma\in\partial\Omega$ there exist
$z_{\gamma}^-$, $z_{\gamma}^+\in \R^n$ and $m\in\R$ such that
\begin{equation}\label{tag:lbsc} \forall\gamma'\in\partial\Omega\quad
\phi(\gamma)+z_{\gamma}^-\cdot(\gamma'-\gamma)\le\phi(\gamma')\le \phi(\gamma)+z_{\gamma}^+\cdot(\gamma'-\gamma)\end{equation}
and $|z_{\gamma}^{\pm}|\le m$ for every $\gamma\in\partial\Omega$.
\end{definition}

\begin{lem}\label{Lemmabound}Assume that $f_-,\tilde f,f_+:\R^n\rightarrow\R$ are convex and superlinear. Moreover assume that 
\begin{equation}\label{fmenopiù}
    f_-(\xi)\le \tilde f(\xi)\le f_+(\xi).
\end{equation}
Let $g:\Omega\times\R\rightarrow\R$  a Carathéodory function satisfying \textnormal{(G1)}. 
Let $\varphi$ satisfy the  
 {\rm (BSC)} and $\tilde u$ be a minimizer of 
\begin{equation}\label{funzftilde}
\int_{B_R} \tilde f(D v)+g(x,v)\, dx \qquad v\in\varphi+W^{1,1}_0(B_R).   \end{equation}
Then there exists $\bar K=\bar K(L,f_-,f_+,\|\varphi\|_{L^\infty(B_R)})$
such that $\|\tilde u\|_{L^{\infty}(B_R)}\le \bar K$. 
\end{lem}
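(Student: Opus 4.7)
The plan is to push the Cellina-type barrier construction used in the proof of Lemma \ref{comparison} through the uniform sandwich $f_-\le \tilde f\le f_+$. The key point is that the $L^\infty$ bound provided by Lemma \ref{comparison} for the minimizer of \eqref{funzftilde} is expressed in terms of the Fenchel conjugate $\tilde f^{*}$, and the latter can be controlled pointwise by $f_+^{*}$ and $f_-^{*}$ independently of the particular intermediate integrand $\tilde f$.

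First, since $\varphi$ satisfies the (BSC), a standard Lipschitz extension with slope at most the (BSC)-rank provides $\varphi\in L^\infty(B_R)$, so all hypotheses of Lemma \ref{comparison} are met for $\tilde u$ with integrand $\tilde f$. Picking any $x_0\in B_R$ and writing
$$\tilde\omega_L(x):=\tfrac{n}{L}\tilde f^{*}\!\bigl(\tfrac{L}{n}(x-x_0)\bigr),\qquad \tilde\omega_{-L}(x):=-\tfrac{n}{L}\tilde f^{*}\!\bigl(-\tfrac{L}{n}(x-x_0)\bigr),$$
that lemma gives $K^{-}\le \tilde u\le K^{+}$ a.e.\ in $B_R$, where
$$K^{-}=\inf_{B_R}\tilde\omega_L+\inf_{\partial B_R}\varphi-\sup_{\partial B_R}\tilde\omega_L,\qquad K^{+}=\sup_{B_R}\tilde\omega_{-L}+\sup_{\partial B_R}\varphi-\inf_{\partial B_R}\tilde\omega_{-L}.$$

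Second, the pointwise ordering $f_-\le \tilde f\le f_+$ on $\R^n$ yields, by the order-reversing property of the Fenchel conjugation, $f_+^{*}\le \tilde f^{*}\le f_-^{*}$ on $\R^n$. Substituting these inequalities into the formulas for $K^{\pm}$, while tracking the signs carefully (the lower bound $\tilde f^{*}\ge f_+^{*}$ must be used on the $+\inf$-term and the upper bound $\tilde f^{*}\le f_-^{*}$ on the $-\sup$-term in $K^{-}$, and conversely for $K^{+}$), together with $|\!\inf_{\partial B_R}\varphi|,|\!\sup_{\partial B_R}\varphi|\le \|\varphi\|_{L^\infty(B_R)}$, produces two-sided a priori bounds for $\tilde u$ depending only on $L$, $\|\varphi\|_{L^\infty(B_R)}$, and the polars $f_-^{*}$, $f_+^{*}$ evaluated on the bounded set $\frac{L}{n}(B_R-x_0)$. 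Taking $\bar K$ to be the maximum of the absolute values of these two bounds gives the conclusion.

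The only nontrivial step is the sign book-keeping in the second paragraph: because Fenchel conjugation reverses order, one has to apply $\tilde f^{*}\le f_-^{*}$ exactly where $\tilde f^{*}$ enters with a minus sign and $\tilde f^{*}\ge f_+^{*}$ where it enters with a plus sign. Once this is organized, every constant in the estimate is uniform with respect to $\tilde f$ inside the band $f_-\le \tilde f\le f_+$, and the remainder is a line-by-line transcription of the proof of Lemma \ref{comparison}.
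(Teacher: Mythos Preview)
Your argument is correct and follows essentially the same route as the paper: apply the barrier bounds from Lemma~\ref{comparison} to the minimizer of \eqref{funzftilde}, then use the order-reversing property of Fenchel conjugation, $f_+^{*}\le\tilde f^{*}\le f_-^{*}$, to replace each occurrence of $\tilde f^{*}$ by $f_+^{*}$ or $f_-^{*}$ according to sign. The paper carries this out in two lines without spelling out the sign book-keeping, but the content is identical.
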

\begin{proof}\ 
Assumption \eqref{fmenopiù}  implies that 
\begin{equation}
    f_+^*(\xi)\le\tilde f^*(\xi)\le f_-^*(\xi).
\end{equation}
As in the proof of Lemma \ref{comparison} we obtain
$$
\tilde u(x)\ge \frac{n}{L}\tilde f^* \left (\frac{L}{n}(x-x_0) \right )+\inf_{\partial B_R} \varphi(x)-\sup_{\partial B_R} \frac{n}{L}\tilde f^*\left (\frac{L}{n}(x-x_0) \right )
$$
$$
\ge \frac{n}{L}f_+^*\left (\frac{L}{n}(x-x_0)\right )-\|\varphi\|_{L^\infty(B_R)}-\sup_{\partial B_R} \frac{n}{L}f_-^*\left (\frac{L}{n}(x-x_0) \right ).
$$
Similar computation yields the inequality from above.
\end{proof}

\bigskip

\section{A priori estimates}

\label{tre}

In this section we prove a result that is in the same flavour of
\cite[Lemma 6.2]{EMMP}; we underline that here we deal also with lower order terms. For this reason in the proof
we will highlight only the main technical points that arise from the different structure of the functional.

\begin{theo}
\label{stimapriori}  Suppose that $f$ satisfies the growth assumptions \textnormal{(F1)}--\textnormal{(F4)} with the parameters $\alpha $, $\beta $, $\mu $ related by \eqref{ab}.
In addition, assume that $f$ is of class $\mathcal{C}^{2}(\mathbb{R}^{n})$ and for every $%
M>0$ there exists a positive constant $\ell =\ell (M)$ such that 
\begin{equation}
\ell \,|\lambda |^{2}\leq \,\sum_{i,j=1}^{n}f_{\xi _{i}\xi _{j}}(\xi
)\,\lambda _{i}\,\lambda _{j}\qquad \forall \lambda ,\xi \in \mathbb{R}%
^{n},\,|\xi |\leq M.  \label{supp1}
\end{equation}%

Assume moreover that $g$ fulfills assumptions \textnormal{(G1)}.
\\
Let $u\in W_{loc}^{1,\infty}(\Omega)$ be a local minimizer of the functional \eqref{funz-modello}.
Then  for every $R>0$ sufficiently small and
$
0<\rho <R$ there exists a positive constant $
C $ depending on $\rho $, $R$, $C_{1}$, $C_{2}$, $\alpha $, $\beta $, $\mu $
, $h_{1}(t_{0})$, $L$, such that 
\begin{equation}
\label{PP2}
\Vert Du\Vert _{L^{\infty }(B_{\rho }\,;\mathbb{R}^{n})}\leq \,C\,\left\{ 
\frac{1}{(R-\rho )^{n}}\int_{B_{R}}\{1+f(Du)\}\,dx\right\} ^{\theta }
\end{equation}%
where $\theta =\frac{(2-\mu )\alpha }{2-\mu -\alpha (n\beta -\mu )}$.
\end{theo}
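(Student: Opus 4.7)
\textbf{Plan of the proof of Theorem \ref{stimapriori}.}
The plan is to follow the general scheme of \cite[Lemma 6.2]{EMMP}, where only the $f(Du)$-part of the functional is present, and to show that the extra lower-order contribution coming from $g(x,u)$ can be treated as a bounded perturbation thanks to the Lipschitz assumption (G1). Concretely, since $u\in W^{1,\infty}_{loc}(\Omega)$, $f\in \mathcal{C}^2(\R^n)$ with the auxiliary ellipticity \eqref{supp1}, and $g$ is Lipschitz in its second variable with constant $L$, a standard regularization of $g$ in the $u$-variable (a mollification that preserves the bound $|g_u|\le L$) allows to deduce that $u$ satisfies the Euler--Lagrange equation
\begin{equation*}
\int_\Omega \langle Df(Du), D\varphi\rangle\,\dx + \int_\Omega g_u(x,u)\,\varphi\,\dx = 0
\qquad\forall\,\varphi\in W^{1,1}_0(B_R),
\end{equation*}
with $|g_u(x,u)|\le L$ almost everywhere.

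Next I would plug into the Euler--Lagrange equation the test function
$\varphi=-\sum_{s=1}^n D_s\bigl(\eta^{2}\,D_s u\,\Phi(|Du|)\bigr)$,
where $\eta\in\mathcal{C}^\infty_c(B_R)$ is a standard cut-off with $\eta\equiv 1$ on $B_\rho$ and $|D\eta|\le c/(R-\rho)$, and $\Phi$ is a suitable nonnegative, nondecreasing weight of the form $\Phi(t)=\gamma(t)^{2q}$ depending on a free parameter $q\ge 0$ (exactly as in \cite{EMMP}). An integration by parts in the principal part reproduces the good term
\begin{equation*}
\int_{B_R}\eta^{2}\,\Phi(|Du|)\sum_{i,j,s}f_{\xi_i\xi_j}(Du)\,D_s D_i u\,D_s D_j u\,\dx,
\end{equation*}
which, by (F1), controls $\int \eta^{2}\Phi(|Du|)\,h_1(|Du|)|D^2u|^2\,\dx$. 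The lower-order contribution reads $-\sum_s\int g_u\,D_s\bigl(\eta^{2}D_s u\,\Phi\bigr)\,\dx$; expanding this derivative produces terms containing $|D\eta|\,|Du|\,\Phi$, $|D^{2}u|\,\Phi$ and $|D^{2}u|\,|Du|\,\Phi'(|Du|)$, each multiplied by the bounded factor $g_u$. Using $|g_u|\le L$ and Young's inequality with a small parameter, all terms containing $|D^2u|$ get absorbed by the good term, while the remainder is controlled by $\int_{B_R}(1+f(Du))\,\dx$, up to a constant depending on $L$ and $(R-\rho)^{-2}$.

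The outcome is a Caccioppoli-type inequality of the same structural form as \cite[(6.3)]{EMMP}, but with an additional harmless summand proportional to $L^{2}\int (\eta^{2}+|D\eta|^{2})(1+|Du|^{2})\Phi(|Du|)\,\dx$. At this point the argument becomes essentially the one of \cite[Lemma 6.2]{EMMP}: combining the Caccioppoli estimate with the Sobolev embedding $W^{1,2}\hookrightarrow L^{2^*}$, exploiting the growth relation (F3) to convert $h_1(|Du|)|D^2u|^2$ into an $L^{2^*}$-norm of an auxiliary function of $|Du|$, and using (F2) and (F4) to relate $h_2(|Du|)|Du|^2$ to a power of $1+f(Du)$, one gets a reverse-H\"older-type inequality for $1+f(Du)$ weighted by $\gamma(|Du|)^{2q}$. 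Iterating over $q_k\to\infty$ in Moser's fashion yields the $L^\infty$-estimate (\ref{PP2}), where the gap condition \eqref{ab} is exactly what makes the iteration converge with exponent $\theta=\frac{(2-\mu)\alpha}{2-\mu-\alpha(n\beta-\mu)}$.

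The main technical obstacle I expect is precisely the treatment of the term coming from $g_u$: since $g$ is only Lipschitz in $u$ (and merely Carath\'eodory in $x$), there is no hope of differentiating it in the $x$-variable, so one cannot handle the lower-order term by a symmetric differentiation trick. The point is to avoid differentiating $g_u$ altogether, keep it as a bounded multiplier, and absorb everything into the Caccioppoli inequality. The second delicate point is bookkeeping: one needs to verify that the additional $L$-dependent terms do not spoil the exponents on which the Moser scheme relies, so that the final exponent $\theta$ coincides with the one in \cite{EMMP} and the condition \eqref{ab} is indeed unaffected by the presence of $g$.
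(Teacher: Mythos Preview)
Your proposal is correct and follows essentially the same approach as the paper: derive the Euler--Lagrange/second variation identity, test with $\eta^{2}\,u_{x_k}\,\Phi$ (the paper uses $\Phi((|Du|-1)_+)$ with $\Phi(t)=(1+t)^{\gamma-2}t^{2}$ rather than your $\gamma(t)^{2q}$, but this is only bookkeeping), keep $g_u$ as a bounded multiplier with $|g_u|\le L$, and absorb the three resulting lower-order terms---exactly your $|D\eta|\,|Du|\,\Phi$, $|D^{2}u|\,\Phi$, $|D^{2}u|\,|Du|\,\Phi'$, which are the paper's $I_2$, $I_4$, $I_6$---via Young's inequality into the good term, after which the Moser iteration from \cite{EMMP} runs unchanged and yields \eqref{PP2} with the same exponent $\theta$.
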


Before proving this result, we state a Lemma, analogous to Lemma 6.1 in \cite{EMMP}, where we suppose that  $t \mapsto t h_1(t)$ is increasing while in in \cite{EMMP} it is assumed that
$t \mapsto t h_2(t)$ is increasing.
The proof is omitted since it requires only minor changes. For the sake of simplicity we assume, in the remaining part of the section, that $t_0=1$.

\begin{lem}
\label{g1eg2} Let us assume that \textnormal{(F2)} and \textnormal{(F3)} hold. Then
for every $\gamma\geq 0$ there exists a constant $C_3= C_3(C_1\,,h_1(1))>0$
independent of $\gamma$, such that, for every $t\geq 0$
\begin{equation}  \label{eq:int1}
C_3\left[ 1+ h_2(1 + t)^{\frac{1}{2^*}} \frac{(1 + t)^{\frac{\gamma}{2}%
+1-\beta}}{ \left (\frac{\gamma}{2} + 1 - \beta \right )^2} \right] \leq
1+\int_0^t (1+s)^{\frac{\gamma - 2}{2}} s \sqrt{h_1(1+s)}\,ds.
\end{equation}
\end{lem}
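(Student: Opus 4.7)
The inequality is purely scalar, and the plan is to combine the structural assumptions \textnormal{(F2)} and \textnormal{(F3)} to produce a pointwise lower bound on the integrand, then compute the resulting elementary power integral. I would first dispose of the easy regime in which $t$ is bounded and $\gamma$ is small: by \textnormal{(F2)} the function $t\mapsto t^\mu h_2(t)$ is decreasing, so $h_2(1+t)\le h_2(1)$, and \textnormal{(F3)} at $t=1$ controls $h_2(1)^{1/2^*}$ by a constant depending only on $C_1$ and $h_1(1)$. Consequently, as long as $\gamma/2+1-\beta$ stays below a prescribed threshold, the bracketed quantity on the left-hand side is bounded by a constant depending only on $C_1, h_1(1)$, and the inequality follows from the additive $1$ on the right by choosing $C_3$ small enough.

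In the substantive regime I would apply \textnormal{(F3)} in the form
\[
\sqrt{h_1(1+s)}\ge C_1^{-1/2}\,(1+s)^{-\beta}\,h_2(1+s)^{1/2^*},
\]
and use \textnormal{(F2)} to transfer $h_2(1+s)$ to the endpoint $h_2(1+t)$:
\[
h_2(1+s)^{1/2^*}\ge h_2(1+t)^{1/2^*}\Bigl(\tfrac{1+t}{1+s}\Bigr)^{\mu/2^*},\qquad 0\le s\le t.
\]
After inserting these into the integrand and factoring $h_2(1+t)^{1/2^*}(1+t)^{\mu/2^*}$ out of the integral, the problem reduces to a lower bound on
\[
I(t):=\int_0^t (1+s)^{b-1}\,s\,ds,\qquad b:=\frac{\gamma}{2}-\beta-\frac{\mu}{2^*}.
\]

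A direct integration by parts yields $I(t)=\bigl[(bt-1)(1+t)^b+1\bigr]/\bigl[b(b+1)\bigr]$. For $b>0$ and $t\ge\max(1,2/b)$, the numerator is bounded below by $\tfrac12 b(1+t)^{b+1}$, giving $I(t)\ge (1+t)^{b+1}/[4(b+1)]$. Multiplying by the pulled-out factor $(1+t)^{\mu/2^*}h_2(1+t)^{1/2^*}$, the exponent on $(1+t)$ collapses exactly to $\gamma/2+1-\beta$, yielding
\[
\text{RHS}\ge 1+\frac{C\,h_2(1+t)^{1/2^*}(1+t)^{\gamma/2+1-\beta}}{\gamma/2+1-\beta-\mu/2^*}.
\]

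To convert the linear denominator into the quadratic $(\gamma/2+1-\beta)^2$ required by the statement, I would verify by a short case distinction ($n=2$ via the hypothesis $2^*>2/(1-\beta)$, and $n\ge3$ via $2^*=2n/(n-2)$ together with $\beta<2/n$) that $\beta+\mu/2^*<1$ uniformly in the parameters, so that $\gamma/2+1-\beta-\mu/2^*$ is bounded below by a positive constant $c_0$ for every $\gamma\ge0$. A one-variable calculus check then gives $1/(\gamma/2+1-\beta-\mu/2^*)\ge c_0'/(\gamma/2+1-\beta)^2$ uniformly in $\gamma$, completing the argument. I expect the main obstacle to be merging the constants cleanly across the regimes $\gamma$ small (where $b\le0$, integration by parts is not directly useful, and the additive-$1$ slack must be invoked) and $\gamma$ large (where the quadratic denominator is actually looser than what the integration by parts produces)---the strict inequality $\beta<2/n$ from \textnormal{(F3)} enters precisely to guarantee the positivity $1-\beta-\mu/2^*>0$ that makes this matching possible uniformly in $\gamma$.
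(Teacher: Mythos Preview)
The paper omits the proof of this lemma entirely, referring to Lemma~6.1 in \cite{EMMP} and noting that only minor changes are needed (the monotonicity of $th_1$ replaces that of $th_2$). Your approach---use \textnormal{(F3)} to bound $\sqrt{h_1(1+s)}$ from below by $C_1^{-1/2}(1+s)^{-\beta}h_2(1+s)^{1/2^*}$, then use the monotonicity of $t^\mu h_2(t)$ from \textnormal{(F2)} to pull $h_2(1+t)^{1/2^*}$ outside the integral, and finally integrate the remaining power---is exactly the right strategy and almost certainly matches the argument in \cite{EMMP}. Your verification that $1-\beta-\mu/2^*>0$ (via $\beta<2/n$ and the definition of $2^*$) is the key structural fact that makes the exponents line up, and you identify it correctly.

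There is, however, a genuine gap in your case analysis. You claim the ``easy regime'' (use the additive $1$) disposes of the case where $\gamma$ is small, and you restrict your integral bound to $b>0$ and $t\ge\max(1,2/b)$. But for $\gamma$ small---say $\gamma=0$, so $b=-\beta-\mu/2^*\le 0$---and $t$ large, the bracketed term on the left behaves like $h_2(1)^{1/2^*}(1+t)^{1-\beta-\mu/2^*}$, which \emph{grows} since $1-\beta-\mu/2^*>0$; the additive $1$ on the right cannot absorb it, and your integration-by-parts estimate does not apply because $b\le 0$. The fix is simple: drop the integration by parts and instead use, for $t\ge 1$,
\[
I(t)\ge\int_1^t (1+s)^{b-1}\cdot\frac{1+s}{2}\,ds=\frac{(1+t)^{b+1}-2^{b+1}}{2(b+1)},
\]
which is valid for every $b+1>0$ (and you have $b+1\ge c_0:=1-\beta-\mu/2^*>0$ uniformly). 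For $t\ge T_0:=2^{1+1/c_0}-1$, a threshold independent of $\gamma$, this yields $I(t)\ge(1+t)^{b+1}/[4(b+1)]$, and your conversion $1/(b+1)\ge(1-\beta)/x^2$ finishes the job. The remaining range $t<T_0$ then splits cleanly: if also $b+1\le 2$, the left side is bounded and the additive $1$ suffices; if $b\ge 1$, the subcase $bt<2$ forces $(1+t)^{b+1}\le(1+2/b)^{b+1}\le 3e^2$ (again bounded), while $bt\ge 2$ with $t<T_0$ is handled by your original numerator estimate $(bt-1)(1+t)^b\ge (bt/2)(1+t)^b$.
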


Now we proceed with the proof of Theorem \ref{stimapriori}.

\begin{proof}\
Since the local minimizer $u$ is in $W_{loc}^{1,\infty }(\Omega )$,
it satisfies the Euler equation: for every open set $\Omega ^{\prime }$
compactly contained in $\Omega $ we have 
\begin{equation*}
\int_{\Omega }\sum_{i=1}^{n}f_{\xi _{i}}(Du)\,\varphi _{x_{i}} + g_u(x,u) \, \varphi \,dx=0\qquad
\forall \varphi \in W_{0}^{1,2}(\Omega ^{\prime }).
\end{equation*}%
Moreover, by the techniques of the difference quotient (see for example \cite%
[Theorem $1.1'$, Ch. II]{giaquinta}), $u\in W_{\mathrm{loc}}^{2,2}(\Omega )$, then
the second variation holds: 
\begin{equation}\label{varseconda}
\int_{\Omega }\sum_{i,j=1}^{n}f_{\xi _{i}\xi _{j}}(Du)u_{x_{j}x_{k}}\varphi
_{x_{i}} - g_u(x,u) \varphi_{x_k} \,dx=0,\quad \forall k=1,\dots ,n
\end{equation}
$\forall \varphi \in
W_{0}^{1,2}(\Omega ^{\prime })$. 

For fixed $k=1,\dots ,n$ let $\eta \in \mathcal{C}_{0}^{1}(\Omega ^{\prime
}) $ be equal to 1 in $B_{\rho }$, with support contained in $B_{R}$, such
that $|D\eta |\leq \,\frac{2}{(R-\rho )},$ and consider
\[
\varphi =\eta^{2}\,u_{x_{k}}\,\Phi ((|Du|-1)_{+})
\]
with $\Phi $ non negative, increasing,
locally Lipschitz continuous on $[0,+\infty )$, such that $\Phi (0)=0$. Here $(a)_{+}$ denotes the positive part of $a\in \mathbb{R}$; in the following we denote $\Phi ((|Du|-1)_{+})=\Phi (|Du|-1)_{+}.$
Then a.e. in $\Omega$ 
\begin{eqnarray*}
\varphi _{x_{i}}&=& 2\eta \,\eta _{x_{i}}u_{x_{k}}\Phi (|Du|-1)_{+}+\eta
^{2}u_{x_{i}x_{k}}\Phi (|Du|-1)_{+}\\
&& +\eta ^{2}u_{x_{k}}\Phi ^{\prime
}(|Du|-1)_{+}[(|Du|-1)_{+}]_{x_{i}}.
\end{eqnarray*}%
Proceeding along the lines of \cite{mar96}, see also \cite{EMMP}, we therefore deduce that 
\begin{eqnarray*}
0 &=&\int_{\Omega }2\eta \Phi (|Du|-1)_{+} \sum_{i,j=1}^{n} \eta _{x_{i}}u_{x_{k}} f_{\xi _{i}\xi
_{j}}(Du)u_{x_{j}x_{k}} \,dx \\
&& -\int_{\Omega }2\eta \Phi (|Du|-1)_{+}  \eta _{x_{k}}u_{x_{k}} g_u(x,u) \,dx \\
&&+\int_{\Omega }\eta ^{2}\Phi (|Du|-1)_{+}\sum_{i,j=1}^{n}f_{\xi _{i}\xi
_{j}}(Du)u_{x_{j}x_{k}}u_{x_{i}x_{k}}\,dx \\
&&- \int_{\Omega }\eta^2 \Phi (|Du|-1)_{+} u_{x_{k} x_k} g_u(x,u) \,dx \\
&&+\int_{\Omega }\eta ^{2}\Phi ^{\prime }(|Du|-1)_{+}\sum_{i,j=1}^{n}f_{\xi
_{i}\xi _{j}}(Du)u_{x_{j}x_{k}}u_{x_{k}}[(|Du|-1)_{+}]_{x_{i}}\,dx \\
&&-\int_{\Omega }\eta ^{2}\Phi ^{\prime }(|Du|-1)_{+} g_u(x,u) u_{x_{k}}[(|Du|-1)_{+}]_{x_{k}}\,dx \\
&=& I_{1k} - I_{2k} + I_{3k} - I_{4k} + I_{5k} - I_{6k}. 
\end{eqnarray*}%
We now sum the previous equation with respect to $k$ from $1$ to $n$, and we denote by $I_1$ - $I_6$
the corresponding integrals. 

First of all we have
\begin{equation}
\label{elenco}
I_3 + I_5 \le \, |I_1| + |I_2| + |I_4| + |I_6|.
\end{equation}
We start by estimating $|I_{1}|$ by using the
Cauchy-Schwarz inequality and the Young inequality so that 
\begin{equation*}
\begin{split}
|I_{1}| & = \left\vert \int_{\Omega }2\eta \Phi (|Du|-1)_{+}\sum_{i,j, k=1}^{n}f_{\xi
_{i}\xi _{j}}(Du)u_{x_{j}x_{k}}\eta _{x_{i}}u_{x_{k}}\,dx\right\vert \\
& \quad \leq \int_{\Omega }2\Phi (|Du|-1)_{+}\left( \eta
^{2}\sum_{i,j, k=1}^{n}f_{\xi _{i}\xi
_{j}}(Du)u_{x_{i}x_{k}}u_{x_{j}x_{k}}\right) ^{\frac{1}{2}} \\
& \qquad \times \left(
\sum_{i,j, k=1}^{n}f_{\xi _{i}\xi _{j}}(Du)\eta _{x_{i}}u_{x_{k}}\eta
_{x_{j}}u_{x_{k}}\right) ^{\frac{1}{2}}\,dx \\
& \quad \leq \frac{1}{2}\int_{\Omega }\eta ^{2}\Phi
(|Du|-1)_{+}\sum_{i,j, k=1}^{n}f_{\xi _{i}\xi
_{j}}(Du)u_{x_{i}x_{k}}u_{x_{j}x_{k}}\,dx \\
& \qquad +2\int_{\Omega }\Phi (|Du|-1)_{+}\sum_{i,j, k=1}^{n}f_{\xi _{i}\xi
_{j}}(Du)\eta _{x_{i}}u_{x_{k}}\eta _{x_{j}}u_{x_{k}}\,dx\\
& \quad \leq \frac{1}{2}\int_{\Omega }\eta ^{2}\Phi
(|Du|-1)_{+}\sum_{i,j, k=1}^{n}f_{\xi _{i}\xi
_{j}}(Du)u_{x_{i}x_{k}}u_{x_{j}x_{k}}\,dx \\
& \qquad + 2 \int_{\Omega } |D \eta|^2 \, \Phi (|Du|-1)_{+} h_2(1 + (|Du|-1)_+) \, (1 + (|Du|-1)_+)^2 \,dx.
\end{split}%
\end{equation*}%
On the other hand, by (F2), recalling that, for $t\ge 1$, $h_2(t) \ge h_1(t) \ge \, {h_1(1)}/t$, we deduce
\begin{equation*}
\begin{split}
|I_{2}| & = \left\vert \int_{\Omega }  2\eta \Phi (|Du|-1)_{+} \sum_{k=1}^n \eta _{x_{k}}u_{x_{k}} g_u(x,u) \,dx
\right\vert \\
& \leq \, L  \int_{\Omega} (\eta^2 + |D \eta|^2) |Du| \,  \Phi (|Du|-1)_{+} \, dx\\
& \leq \, \frac{L}{h_1(1)} \int_{\Omega} (\eta^2 + |D \eta|^2) \Phi (|Du|-1)_{+} h_2(|Du|) \, |Du|^2 \, dx\\
& = \frac{L}{h_1(1)} \int_{\Omega} (\eta^2 + |D \eta|^2) \Phi (|Du|-1)_{+} h_2(1 + (|Du|-1)_+) \, (1 + (|Du|-1)_+)^2  \, dx.
\end{split}%
\end{equation*}%
Now we estimate the term $|I_4|.$ Taking into account that, for $t\ge 1$, 
\begin{equation}
    \label{g1g2}
 h_1(t) \, h_2(t) \, t^2\ge h_1(t)^2t^2 \ge h_1(1)^2
\end{equation}
we have
\begin{equation*}
\begin{split}
|I_{4}| & = \left\vert \int_{\Omega }  \eta^2 \Phi (|Du|-1)_{+} \sum_{k=1}^n u_{x_{k} x_k}  g_u(x,u) \,dx
\right\vert \\
& \leq \, \frac{nL}{h_1(1)} \int_{\Omega} \left [\eta^2 |D^2u|^{2}  \Phi (|Du|-1)_{+} h_1(1 + (|Du| - 1)_+)\right ]^{\frac{1}{2}} \\
& \quad \times \left [\eta^2 \Phi (|Du|-1)_{+} h_2(1 + (|Du| - 1)_+) (1 + (|Du|-1)_+))^2\right ]^{\frac{1}{2}} \, dx\\
&  \leq \, \varepsilon \, \int_{\Omega} \eta^2  \Phi (|Du|-1)_{+} h_1(1 + (|Du| - 1)_+) |D^2 u|^{2} \, dx \\
& \quad + \frac{n^2 L^2}{4 h_1(1)^2 \varepsilon}  \int_{\Omega} \eta^2 \Phi (|Du|-1)_{+} h_2(1 + (|Du| - 1)_+) (1 + (|Du|-1)_+)^2 \, dx,
\end{split}%
\end{equation*}%
where $\varepsilon$ is a positive parameter that will be suitably chosen later.
\\
We then estimate $|I_6|$ as follows
\begin{equation*}
\begin{split}
|I_{6}| & = \left\vert \int_{\Omega }\eta ^{2}\Phi ^{\prime }(|Du|-1)_{+} g_u(x,u) \sum_{k=1}^n u_{x_{k}}[(|Du|-1)_{+}]_{x_{k}}\,dx
\right\vert \\
& \leq  \, L \int_{\Omega} \eta ^{2}\Phi ^{\prime }(|Du|-1)_{+} (1 + (|Du|-1)_+) |D(|Du|-1)_{+}|\,dx\\
& \stackrel{\eqref{g1g2}}{\leq} \, \frac{L}{h_1(1)} \int_{\Omega} \eta ^{2}\Phi ^{\prime }(|Du|-1)_{+} (1 + (|Du|-1)_+) |D(|Du|-1)_{+}| \\ 
& \quad \times \sqrt{h_1(1 + (|Du|-1)_+) \, h_2(1 + (|Du| -1)_+)} (1 + (|Du| - 1)_+)\,dx\\
& \leq  \, \frac{L}{h_1(1)} \int_{\Omega} \Big [\eta^2 |D(|Du|-1)_{+}|^{2} \Phi ^{\prime }(|Du|-1)_{+} (1 + (|Du|-1)_+)\\
& \quad \times h_1(1 + (|Du| - 1)_+)\Big ]^{\frac{1}{2}} \Big [\eta^2 \Phi ^{\prime }(|Du|-1)_{+} (1 + (|Du|-1)_+) \\
& \times h_2(1 + (|Du| - 1)_+) (1 + (|Du|-1)_+)^2\Big ]^{\frac{1}{2}} \, dx\\
&  \leq  \varepsilon \, \int_{\Omega} \eta^2  \Phi ^{\prime }(|Du|-1)_{+} (1 + (|Du|-1)_+) h_1(1 + (|Du| - 1)_+) |D(|Du|-1)_{+}|^{2} \, dx \\
& \quad + \, \frac{L^2}{4 h(1)^2 \varepsilon} \int_{\Omega} \eta^2 \Phi ^{\prime }(|Du|-1)_{+} (1 + (|Du|-1)_+) \\
&\times h_2(1 + (|Du| - 1)_+) (1 + (|Du|-1)_+)^2 \, dx. 
\end{split}%
\end{equation*}%
Finally, since a.e.~in $\Omega $ 
\begin{equation*}
\lbrack (|Du|-1)_{+}]_{x_{i}}=%
\begin{cases}
(|Du|)_{x_{i}}=\frac{1}{|Du|}\sum_{k}u_{x_{i}x_{k}}u_{x_{k}} & \text{ if $%
|Du|>1$,} \\ 
0 & \text{ if $|Du|\leq 1$,}%
\end{cases}%
\end{equation*}%
we obtain 
\begin{equation*}
\begin{split}
I_5 = & \int_{\Omega} \eta^2 \Phi'(|Du|-1)_{+} \sum_{i,j,k=1}^{n} f_{\xi _{i}\xi
_{j}}(Du)u_{x_{j}x_{k}}u_{x_{k}}[(|Du|-1)_{+}]_{x_{i}} \, dx \\
= & \int_{\Omega} \eta^2 \Phi'(|Du|-1)_{+} |Du|%
\sum_{i,j=1}^{n}f_{\xi _{i}\xi
_{j}}(Du)[(|Du-1|)_{+}]_{x_{j}}[(|Du|-1)_{+}]_{x_{i}} \, dx.
\end{split}
\end{equation*}%
Inserting the estimates obtained in \eqref{elenco}, we deduce
\begin{equation*}
\begin{split}
& \int_{\Omega }\eta ^{2}\Phi (|Du|-1)_{+} \sum_{i,j,k=1}^{n} f_{\xi _{i}\xi
_{j}}(Du)u_{x_{j}x_{k}}u_{x_{i}x_{k}}\,dx \\
& + \int_{\Omega} \eta^2 \Phi'(|Du|-1)_{+} |Du| \sum_{i,j =1}^{n} f_{\xi _{i}\xi
_{j}}(Du)[(|Du-1|)_{+}]_{x_{j}}[(|Du|-1)_{+}]_{x_{i}} \, dx\\
\le & \frac{1}{2}\int_{\Omega }\eta ^{2}\Phi
(|Du|-1)_{+}\sum_{i,j, k=1}^{n}f_{\xi _{i}\xi
_{j}}(Du)u_{x_{i}x_{k}}u_{x_{j}x_{k}}\,dx \\
& + 2 \int_{\Omega } |D \eta|^2 \, \Phi (|Du|-1)_{+} h_2(1 + (|Du|-1)_+) \, (1 + (|Du|-1)_+)^2 \,dx\\
& + \frac{L}{h_1(1)} \int_{\Omega} (\eta^2 + |D \eta|^2) \Phi (|Du|-1)_{+} h_2(1 + (|Du|-1)_+) \, (1 + (|Du|-1)_+)^2  \, dx\\
& + \varepsilon \, \int_{\Omega} \eta^2  \Phi (|Du|-1)_{+} h_1(1 + (|Du| - 1)_+) |D^2 u|^{2} \, dx \\
& + \frac{L^2}{4 h(1)^2 \varepsilon}   \int_{\Omega} \eta^2 \Phi (|Du|-1)_{+} h_2(1 + (|Du| - 1)_+) (1 + (|Du|-1)_+)^2 \, dx \\
& +  \varepsilon \, \int_{\Omega} \eta^2  \Phi ^{\prime }(|Du|-1)_{+} (1 + (|Du|-1)_+) h_1(1 + (|Du| - 1)_+) |D(|Du|-1)_{+}|^{2} \, dx 
\\
& + \frac{L^2}{4 h(1)^2 \varepsilon}  \int_{\Omega} \eta^2 \Phi ^{\prime }(|Du|-1)_{+} (1 + (|Du|-1)_+)
\\
&\times h_2(1 + (|Du| - 1)_+) (1 + (|Du|-1)_+)^2 \, dx \\
\end{split}
\end{equation*}
Absorbing the first term in the right side of the inequality by the left hand side and rearranging the terms in the right hand side, we deduce
\begin{equation*}
\begin{split}
& \frac{1}{2} \int_{\Omega }\eta ^{2}\Phi (|Du|-1)_{+} \sum_{i,j, k=1}^{n} f_{\xi _{i}\xi
_{j}}(Du)u_{x_{j}x_{k}}u_{x_{i}x_{k}}\,dx \\
& + \int_{\Omega} \eta^2 \Phi'(|Du|-1)_{+} |Du| \sum_{i,j=1}^{n} f_{\xi _{i}\xi
_{j}}(Du)[(|Du|-1)_{+}]_{x_{j}}[(|Du|-1)_{+}]_{x_{i}} \, dx
\end{split}
\end{equation*}
\newpage
\begin{equation*}
\begin{split}
\le & \varepsilon \, \int_{\Omega} \eta^2  \Phi (|Du|-1)_{+} h_1(1 + (|Du| - 1)_+) |D^2 u|^{2} \, dx \\
& + \varepsilon \, \int_{\Omega} \eta^2  \Phi ^{\prime }(|Du|-1)_{+} (1 + (|Du|-1)_+)  h_1(1 + (|Du| - 1)_+) |D(|Du|-1)_{+}|^{2} \, dx \\
& + \frac{C}{\varepsilon}  \int_{\Omega} (\eta^2 + |D \eta|^2)  \Phi (|Du|-1)_{+}  h_2(1 + (|Du| - 1)_+) (1 + (|Du|-1)_+)^2 \, dx \\
& + \frac{C}{\varepsilon}  \int_{\Omega} (\eta^2 + |D \eta|^2)   \Phi ^{\prime }(|Du|-1)_{+} (1 + (|Du|-1)_+)\\
&\times h_2(1 + (|Du| - 1)_+) (1 + (|Du|-1)_+)^2 \, dx 
\end{split}
\end{equation*}
where $C$ is a constant depending only on $n, L, h_1(1).$ In the sequel we denote by $C$ a constant depending only on $n, L, h_1(1)$, not necessarily the same constant.
Using the ellipticity condition in (F1) and the inequality $|D(|Du|-1)_{+}|^{2}\leq \,|D^{2}u|^{2}$, choosing $\varepsilon$ sufficiently small, we then obtain 
\begin{equation}
\begin{split}
& \int_{\Omega } \eta ^{2} \Phi (|Du|-1)_{+}
h_{1}(1+(|Du|-1)_{+})\,|D(|Du|-1)_{+}|^{2}\,dx \\
\le & \int_{\Omega }\eta ^{2}[\Phi (|Du|-1)_{+}+|Du|\Phi ^{\prime }(|Du|-1)_{+}]{%
h_{1}(1+(|Du|-1)_{+}))}\,|D(|Du|-1)_{+}|^{2}\,dx \\
\leq & C \, \int_{\Omega} (\eta^2 + |D \eta|^2)  \Phi (|Du|-1)_{+}  h_2(1 + (|Du| - 1)_+) (1 + (|Du|-1)_+)^2 \, dx \\
& + C \, \int_{\Omega} (\eta^2 + |D \eta|^2)   \Phi ^{\prime }(|Du|-1)_{+} (1 + (|Du|-1)_+) \\
&\times h_2(1 + (|Du| - 1)_+) (1 + (|Du|-1)_+)^2 \, dx 
\end{split}
\label{(25)Jota}
\end{equation}%

Let us define 
\begin{equation}
G(t)=1+\int_{0}^{t}\sqrt{\Phi (s)\,h_{1}(1+s)}\,ds\qquad \forall t\geq \,0.
\label{defG}
\end{equation}%
By Jensen's inequality and the monotonicity of $\Phi $, since $t\mapsto
th_{1}(t)$ is increasing,
\begin{equation*}
\begin{split}
G(t)& =1+\int_{0}^{t}\sqrt{\Phi (s)(1+s)h_{1}(1+s)\frac{1}{1+s}}\,ds\\
& \leq 1+\sqrt{\Phi (t)(1+t)h_{1}(1+t)}\int_{0}^{t}\frac{1}{\sqrt{1+s}}%
\,ds
\\
&\leq 1+2\sqrt{\Phi (t)(1+t)h_{1}(1+t)}\,\sqrt{1+t},
\end{split}%
\end{equation*}%
hence, recalling that $h_1 \le h_2$
\[
[G(t)]^{2}\leq 8\left[ 1+\Phi (t)(1+t)^{2}h_{1}(1+t)\right] \leq 8\left[ 1+\Phi (t)(1+t)^{2}h_{2}(1+t)\right]. 
\] 
On the
other hand 
\begin{equation*}
\begin{split}
|D& [\eta \, G(|Du|-1)_{+}]|^{2} \\
\leq & \,2\,|D\eta |^{2}[G((|Du|-1)_{+})]^{2}+2\eta ^{2}[G^{\prime
}((|Du|-1)_{+})]^{2}\,|D((|Du|-1)_{+})|^{2} \\
\leq & 16\,|D\eta |^{2}\, \left [1+\Phi (|Du|-1)_{+}\,h_{2}(1+(|Du|-1)_{+})(1+(|Du|-1)_{+})^{2} \right ]\\
& +2\,\eta
^{2}\,\Phi (|Du|-1)_{+}h_{1}(1+(|Du|-1)_{+})\,|D(|Du|-1)_{+}|^{2}.
\end{split}%
\end{equation*}%
Since $\Phi (|Du\left( x\right) |-1)_{+}=0$ when $|Du\left( x\right) |\leq 1$%
, by \eqref{(25)Jota} we get 
\begin{equation}
\begin{split}
& \int_{\Omega }|D(\eta \,G((|Du|-1)_{+})|^{2}\,dx \\
\le & C \, \int_{\Omega} (\eta^2 + |D \eta|^2)  \left [1 + \Phi (|Du|-1)_{+}  h_2(1 + (|Du| - 1)_+) (1 + (|Du|-1)_+)^2\right ] \, dx \\
& + C \, \int_{\Omega} (\eta^2 + |D \eta|^2)   \Phi ^{\prime }(|Du|-1)_{+} (1 + (|Du|-1)_+) \\
&\times h_2(1 + (|Du| - 1)_+) (1 + (|Du|-1)_+)^2 \, dx.
\end{split}
\label{Sobolev3}
\end{equation}%
Let us assume 
\begin{equation}
\Phi (t)=(1+t)^{\gamma -2}t^{2}\qquad \gamma \geq 0
\label{defPhi}
\end{equation}%
from which we deduce
\[
\Phi'(t)=(1+t)^{\gamma -3}t(\gamma t+2) \le \, (\gamma + 2) (1 + t)^{\gamma - 2} t.
\]
With these assumptions \eqref{Sobolev3} reads
\begin{equation}
\begin{split}
& \int_{\Omega }|D(\eta \,G((|Du|-1)_{+})|^{2}\,dx \\
\le & C \,(\gamma + 2) \int_{\Omega }(\eta^2 + |D \eta|^2) \, \left[ 1+(1+(|Du|-1)_{+})^{\gamma +2}\,h_{2}(1+(|Du|-1)_{+}\right]
\,dx.
\end{split}
\label{Sobolev4}
\end{equation}%
By the Sobolev inequality, there exists a constant $c_{S}$ such that 
\begin{equation}
\left\{ \int_{\Omega }[\eta \,G((|Du|-1)_{+})]^{2^{\ast }}\,dx\right\}
^{2/2^{\ast }}\leq \,c_{S}\,\int_{\Omega }|D(\eta (G(|Du|-1)_{+}))|^{2}\,dx
\label{Sobolev1}
\end{equation}%
where $2^{\ast }=\frac{2n}{n-2}$ if $n>2$ and a number greater than $\frac{2%
}{1-\beta }$ if $n=2$. We apply \textcolor{blue}{\eqref{eq:int1}} with the choice $%
t=(|Du|-1)_{+}$ 
\begin{eqnarray*}
G((|Du|-1)_{+}) &=&1+\int_{0}^{(|Du|-1)_{+}}(1+s)^{\frac{\gamma -2}{2}}s%
\sqrt{h_{1}(1+s)}\,ds \\
&\geq &C_{3}\left[ 1+h_{2}(1+(|Du|-1)_{+})^{\frac{1}{2^{\ast }}}\frac{%
(1+(|Du|-1)_{+})^{\frac{\gamma }{2}+1-\beta }}{\left( \frac{\gamma }{2}%
+1-\beta \right) ^{2}}\right]
\end{eqnarray*}%
thus by \eqref{Sobolev4} we obtain that there exists $c=c(C_{3})>0$ such
that, for all $\gamma \geq 0$, 
\begin{equation}
\begin{split}
& \left\{ \int_{\Omega }\eta ^{2^{\ast }}(1+(1+(|Du|-1)_{+})^{(\gamma
+2-2\beta )\frac{2^{\ast }}{2}}\,h_{2}(1+(|Du|-1)_{+}))\,dx\right\} ^{\frac{2%
}{2^{\ast }}} \\
& \qquad \leq \, c \,\left( \frac{\gamma }{2}+1-\beta \right)
^{4}\,(\gamma + 2)\int_{\Omega} (\eta^2 + |D\eta |^{2}) \,(1+(1+(|Du|-1)_{+})^{\gamma
+2}\\
&\times h_{2}(1+(|Du|-1)_{+})\,dx \\
& \qquad \leq \, c \,\left( \gamma +2\right)^{5}\,\int_{\Omega }(\eta^2 + |D \eta|^2) \, \left[ 1+(1+(|Du|-1)_{+})^{\gamma +2}\,h_{2}(1+(|Du|-1)_{+}\right]
\,dx
\end{split}%
\end{equation}%
where we used once more \eqref{Sobolev3} and \eqref{Sobolev1}. From this point onward, we follow the proofs of \cite[Lemma 6.2]{EMMP} and \cite[Lemma 6.3]{EMMP}. The previous inequality, indeed, is the analogous of (6.10) in \cite{EMMP}. Now, by the same iteration process, we obtain that, for $0 < \rho < R,$ $\overline{B}_R \subset \Omega,$ there exists a positive constant $C$ depending only on $n, L, h_1(1),$ such that
\begin{equation}
\label{PP}
\| 1 + (|Du| - 1)_+ \|^{2 - n \beta}_{L^{\infty}(B_{\rho})} \le \, \frac{C}{(R - \rho)^n} \int_{B_R} (1 + (|Du| - 1)_+)^2 h_2((1 + (|Du| - 1)_+) \, dx.
\end{equation}
As in \cite[Lemma 6.3]{EMMP}, set
\[
V = (1 + (|Du| - 1)_+))^2 h_2((|Du| - 1)_+).
\]
By (F2) and \eqref{PP} there exists $C_{\mu} > 0$ such that
\[
\left \| V\right \|^{\frac{2 - n \beta}{2 - \mu}}_{L^{\infty}(B_{\rho)}}  \le \, \frac{C_{\mu}}{(R - \rho)^n} \int_{B_R} V(x) \, dx.
\]
Moreover, since by \eqref{ab}
\[
\frac{2 - \mu}{2 - n \beta} \left (1 - \frac{1}{\alpha} \right ) < 1,
\] 
from (F4) we can deduce \eqref{PP2}.


\end{proof}


\section{Proof of Theorem \ref{superlinear}}

\label{quattro}

The proof of the theorem is divided in three steps. 
We start by considering, as in \cite{EMMP}, suitable approximations of the functional; in the second step we consider minimizers of these approximating functionals with regular boundary conditions that, in particular satisfy (BSC). This allows us to use the a priori estimates of Section 3. The coercivity of the functional is a crucial property  to perform the passage to the limit in Step 3. We remark that the superlinearity of $f$ follows from assumption (F2): in fact it implies that $h_1(t)\ge h_1(t_0)t^{-1}$ for $t\ge t_0$, then 
 there exists $m>0$ such that
 \begin{equation}
\label{remsuperlinarity}
f(\xi)\ge m|\xi|\log|\xi|
\end{equation}
for $\xi$ sufficiently large (see Lemma 7.2 in \cite{EMMP}).

\bigskip

\textsc{step 1: approximation.} In \cite{EMMP} (see the proof of Theorem 2.1) it has been proved that there exists  a sequence $f_k\in\mathcal{C}^2(\R^n)$ of locally uniformly convex functions such that 
\begin{enumerate}
    \item $f_k$ satisfies (F1)-(F4) with $2h_2$ instead of $h_2$,  constants 
$C_{1}$ and $C_{2}$ independent of $k$;
\item $f_k$ uniformly converges to $f$ on compact sets;
\item for every $\delta >0$ and for every $%
k$ sufficiently large 
\begin{equation}
\label{delta}
f(\xi)\leq
\begin{cases}
f_k(\xi)+\delta & \text{ if $|\xi |\leq t_0+2$}\\
f_k(\xi)        & \text{ if $|\xi |> t_0+2$};
\end{cases}
\end{equation}
\item for every  $\xi\in\R^n$
$$
f(\xi)-1\le f_k(\xi)\le f(\xi)+|\xi|+1.
$$
\end{enumerate}
We observe that, thanks to \eqref{remsuperlinarity}, the functions $f_k$
are superlinear.

Let $u_\epsilon$ a mollification of $u$ on $B_R$. Let $v_{k,\epsilon}$ be the minimizer of 
\begin{equation}
    \label{funz-k-g}
\int_{B_R} f_k(D v)+g(x,v)\, dx
\end{equation}
such that $v=u_\epsilon$ on $\partial B_R$. We observe that $u_\epsilon\in\mathcal{C}^\infty(B_R)$ and hence (see \cite{Miranda, Giusti}) it fulfills the (BSC) on $\partial B_R$.

\vspace{5mm}

\textsc{step 2: boundedness of the gradients.} In this step we are going to prove that $v_{k.\epsilon}\in W^{1,\infty}(B_R)$ for $R$ sufficiently small.


First of all we recall that, since $u_{\varepsilon}$ satisfies the (BSC) on $B_R$,
for every $z \in \partial B_R,$ there exists $\kappa_z^-$ and $\kappa_z^+$ such that
\begin{equation}\label{BSC}
\kappa_z^- (x - z) + u_{\varepsilon}(z) \le \, u_{\varepsilon}(x) \le \kappa_z^+ (x - z) + u_{\varepsilon}(z) \qquad \forall x \in \partial \Omega,
\end{equation}
see \cite{Miranda, Giusti}. 
Our aim is to construct lower and upper Lipschitz barriers for the boundary datum. We follow the ideas used in \cite{FT} and \cite{GT}, remarking the fact that here we are in a slightly different set of assumptions.

We recall that by Proposition \ref{polare}  we have 
that $f_k^*$  is defined in $\R^n$ and superlinear.

Let us fix $z\in \partial B_R$ and let $\kappa_z^-$ as in the left hand side of \eqref{BSC}. We consider the set
\[
\left \{\frac{n}{L} f_k^*\left (\frac{L}{n}x \right) - \kappa_z^- \cdot x - c \le 0\right \} =  \Omega_{\kappa_z^-, c}.
\]
We observe that for $c$ sufficiently large $\Omega_{\kappa_z^-, c}$ is not empty and convex. The superlinearity of $f^*_k$ implies that it is bounded and the fact that $f^*_k$ is finite for every $x\in\R^n$ implies that 
\begin{equation}
    \lim_{c\to + \infty} \min\{|x|:\ x\in\partial \Omega_{\kappa_z^-, c}\}=+\infty.
\end{equation}
Moreover,  by Proposition \ref{polare} v) it  follows that, for $c$ sufficiently large,  $\partial \Omega_{\kappa_z^-, c}$ is $\mathcal{C}^2$
so that 
we can perform the same computations as in Step 2 of the proof of Theorem 4.5 in \cite{GT} and we can show that the principal curvatures of $\partial \Omega_{\kappa_z^-, c}$ at every point $x$ are less or equal than
  

\begin{equation}\label{curvatura}
\frac{|D^2 f^*_k(\frac{L}{n} x)|}{|D f^*_k
(\frac{L}{n} x)|} \le \frac{1}{h_1(|Df^*_k(\frac{L}{n}x)|)|Df^*_k(\frac{L}{n}x)|},
\end{equation}
where we have also used assumption (F1).

Now we fix $\frac{L}{n}x\in\R^n\setminus B_{s_0},$ where $s_0$ is given by Proposition \ref{polare} v), and we define
\[
\varphi(t)=f_k^*\left(t\frac{x}{|x|}\right)\quad\text{ for }\, t\ge s_{0};
\]
then we obtain
\[
\varphi'(t)=Df_k^*\left(t\frac{x}{|x|}\right)\cdot \frac{x}{|x|}  \quad\text{ for }\, t\ge s_{0}
\]
and, by using once more Proposition \ref{polare} v) and assumption (F1)
\[
\varphi''(t)=D^2f_k^*\left(t\frac{x}{|x|}\right)\frac{x}{|x|}\cdot \frac{x}{|x|}\ge \frac{1}{h_2(t)} \quad\text{ for }\, t\ge s_{0}.
\]
It follows that, for $t=\frac{L}{n}|x|$, there exists a non negative constant $C$ such that
\[
\left|Df_k^*\left(\frac{L}{n}x\right)\right|\ge \varphi'\left(\frac{L}{n}|x|\right)\ge \int_{s_0}^{\frac{L}{n}|x|}\frac{1}{h_2(\tau)}\,d\tau +C
\]
and the last term goes to $+\infty$ as $|x|\to +\infty$.
Assumption (F2) implies that there exists $\bar t$ such that $h_1( t)t\ge \delta>0$ for every $t\ge \bar t$. It follows that, if $c$ is sufficiently large the principal curvatures of $\Omega_{\kappa_z^-, c}$ are less or equal to $\frac{1}{\delta}$.

Let now $R<\delta$ and $\nu$ be the normal vector to $\partial B_R$ in $z.$ Thus, there exists $x_z \in \partial \Omega_{\kappa_z^-, c}$ such that its normal vector is exactly $\nu$. 

Let us consider the function
\[
v_z(x)=\frac{n}{L} f^*_k\left (\frac{L}{n} (x - (z - x_z)) \right) + u_{\varepsilon}(z) - \frac{n}{L} f^*_k\left (\frac{L}{n} x_z \right) 
\]

We define 
\begin{equation}
{\tilde\Omega_{\kappa_z^-,c}}=\left\{v_z(x) -\kappa^-_z\cdot(x-(z-x_z))- u_{\varepsilon}(z) + \frac{n}{L} f^*_k\left (\frac{L}{n} x_z \right)-c\le 0\right\} \end{equation}
 and obviously ${\tilde\Omega_{\kappa_z^-,c}}= \Omega_{\kappa_z^-,c} +(z-x_z)$ so that the curvature of $\partial{\tilde\Omega_{\kappa_z^-,c}}$ in $z$ is the same of $\partial\Omega_{\kappa_z^-,c}$ in $x_z$.

Since $R<\delta,$ we have that $B_R\subset {\tilde\Omega_{\kappa_z^-,c}}$ and $z\in \partial B_R\cap \partial{\tilde\Omega_{\kappa_z^-,c}}$. Moreover we remark that
 \begin{equation}
{\tilde\Omega_{\kappa_z^-,c}} =\{v_z(x) \le \kappa^-_z\cdot(x-z)+ u_{\varepsilon}(z)\}  
 \end{equation}
so that we can apply the comparison principle in \cite[Theorem 2.4]{FT} and in \cite[Theorem 2.4]{GT} between the minimizer $v_{k,\varepsilon}$ and the function $v_z$ and conclude that $v_{k,\varepsilon}(x)\ge v_z(x)$ a.e. in $B_R$. 
The construction of the lower barrier is completed considering every $z\in\partial B_R$ and defining
\begin{equation}
\ell^-(x)= \sup_{z\in\partial B_R} v_z(x).
\end{equation}
Repeating an analogous construction we can construct also the upper barrier $\ell^+$.

Remarking that $\ell^\pm$ are Lipschitz in $B_R$ and arguing as in  \cite[Theorem 5.2]{MTrado} and  in \cite[Theorem 4.6]{GT}, we conclude the proof of this step.

\vspace{5mm}

\textsc{step 3: passage to the limit.}
 We can apply Lemma \ref{Lemmabound} with $f_-(\xi)=f(\xi)-1$ and $f_+(\xi)= f(\xi)+|\xi|+1$ to deduce that there exists a constant $\tilde M$ such that $\|v_{k,\epsilon}\|_{L^\infty(B_R)}\le \tilde M$ for every $k$ and $\epsilon$.
Hence assumptions (G1) and (G2) imply that there exists a constant $\tilde K$ such that that $\|g(x,u_{\epsilon})\|_{L^{1}(B_R)} \le \tilde K$ and $\|g(x,v_{k,\epsilon})\|_{L^{1}(B_R)} \le \tilde K$.
Step 2 implies that $v_{k,\epsilon}\in W^{1,\infty}(B_R)$
and from Theorem \ref{stimapriori} we get the estimate
\[
\|D v_{k,\epsilon}\|_{L^\infty(B_\rho; \mathbb{R}^n)}\le C \, \left(\frac{1}{(R-\rho)^n}\int_{B_R} \{1+f_k(D v_{k,\epsilon})\}\,dx\right)^\theta
\]
where the constant $C$ does not depend on $k$ and $\epsilon$. Adding and subtracting  $g(x,v_{k,\epsilon})$ and using the minimality of $v_{k,\epsilon}$ we obtain
\[
\begin{split}
\|Dv_{k,\epsilon}\|_{L^\infty(B_\rho; \mathbb{R}^n)}
 \le & C \left(\frac{1}{(R-\rho)^n}\int_{B_R} \{1+f_k(D v_{k,\epsilon})+g(x,v_{k,\epsilon}) \} \,dx \right.\\
 & \left. -\frac{1}{(R-\rho)^n}\int_{B_R}  g(x,v_{k,\epsilon})\,dx\right)^\theta \\
 \le & C \, \left(\frac{1}{(R-\rho)^n}\int_{B_R} \{1+f_k(D u_{\epsilon})+g(x,u_{\epsilon})\}\,dx +\frac{\tilde K}{(R-\rho)^n}\right)^\theta .
\end{split}
\]
Therefore
\[
\limsup_{k\to +\infty}\|D v_{k,\epsilon}\|_{L^\infty(B_\rho; \mathbb{R}^n)}
 \le M_\varepsilon
 \]
where 
 \[
 M_\varepsilon= C
 \left[\frac{1}{(R-\rho)^n}\left(\int_{B_R} \{1+f(D u_{\epsilon})+g(x,u_{\epsilon})\} \,dx +\tilde K\right)\right]^\theta .
\]
The sequence $v_{\varepsilon ,k}$ is bounded in $W^{1,\infty }(B_{\rho })$ uniformly
with respect to $k$, then there exists a subsequence $k_{j}\rightarrow \infty$,
such that $\{v_{\varepsilon ,k_{j}}\}$ is weakly$^{\ast }$ convergent in $W^{1,\infty }(B_{\rho })$. 
Now we fix a sequence $\rho_j\rightarrow R$ and, 
by a diagonalization argument, we extract a subsequence, that we still denote by $\{v_{\varepsilon ,k_{j}}\}$, weakly$^*$ converging to
$\bar{v}_{\varepsilon }$ in $W^{1,\infty }(B_{\rho })$ for every $\rho<R$. Recall that $\{v_{\varepsilon ,k_{j}}\}\subset
u_\varepsilon+W_{0}^{1,1}(B_{R})$.
Moreover for every $\rho <R$
\begin{equation}
\Vert D\bar{v}_{\varepsilon }\Vert _{L^{\infty }(B_{\rho}; \mathbb{R}^n)}\leq
M_\varepsilon.  \label{normepsilon}
\end{equation}%
The next step is to prove that $v_{\varepsilon ,k_{j}}$ weakly converges to $\bar{v}_{\varepsilon }$
in $W^{1,1}(B_{R})$ so that $\bar{v}_{\varepsilon }\in u_\varepsilon+W_{0}^{1,1}(B_{R})$.
Indeed by  the minimality of $v_{\varepsilon ,k_{j}}$,
as $j\rightarrow \infty $ we have 
\begin{equation*}
\begin{split}
\int_{B_{R}}f(Dv_{\varepsilon ,k_{j}})\,dx
& \leq \int_{B_{R}}1+f_{k_{j}}(Dv_{\varepsilon ,k_{j}})
 +g(x,v_{\varepsilon ,k_{j}})\,dx - \int_{B_{R}}g(x,v_{\varepsilon ,k_{j}})\,dx \\
& \leq \int_{B_{R}}{f}_{k_{j}}(Du_{\varepsilon})
+g(x,u_{\varepsilon})\,dx+(\tilde K +1)  \\
&\rightarrow \int_{B_{R}}f(Du_{\varepsilon
})+g(x,u_{\varepsilon})\,dx+(\tilde K+1).
\end{split}
\end{equation*}%
The superlinearity of $f$ and  de la Vall\'{e}e-Poussin Theorem imply that we can choose the sequence $k_{j}$ such
that $Dv_{\varepsilon ,k_{j}}\rightharpoonup D\bar{v}_{\varepsilon }$ in $%
L^{1}(B_{R}; \mathbb{R}^n)$ and then $(v_{\varepsilon ,k_{j}}-u_{\varepsilon
})\rightharpoonup (\bar{v}_{\varepsilon }-u_{\varepsilon })\in
W_{0}^{1,1}(B_{R})$.

On the other hand, by  the minimality of $v_{\varepsilon,k_{j}}$ 
\begin{equation*}
\begin{split}
\int_{B_{R }}&f(Dv_{\varepsilon ,k_{j}}) + g(x,v_{\varepsilon ,k_{j}})\,dx\\ 
= &\int_{B_{R}}{f}_{k_{j}}(Dv_{\varepsilon ,k_{j}})+g(x,v_{\varepsilon ,k_{j}})\,dx+\int_{B_{R}}(f(Dv_{\varepsilon ,k_{j}})-{f}_{k_{j}}(Dv_{\varepsilon ,k_{j}})) \, dx \\
 \leq & \int_{B_{R}} f_{k_{j}}(Du_{\varepsilon })+g(x,u_{\varepsilon})\,dx+\int_{B_{R}}(f(Dv_{\varepsilon ,k_{j}})-{f}_{k_{j}}(Dv_{\varepsilon ,k_{j}})) \, dx.
\end{split}%
\end{equation*}%
By 
\eqref{delta} for every $\delta$ there exists $\bar k$ such that for every $k_j>\bar k$ 
\begin{equation*}
\int_{B_{R }}f(Dv_{\varepsilon ,k_{j}}) + g(x,v_{\varepsilon ,k_{j}})\,dx\leq \int_{B_{R}} f_{k_{j}}(Du_{\varepsilon })+g(x,u_{\varepsilon})\,dx+\delta |B_{R}|.
\end{equation*}

By lower semicontinuity in $W^{1,1}(B_{R})$, passing to the limit for $%
j\rightarrow \infty $, we get 
\begin{equation*}
\begin{split}
\int_{B_{R }}&f(D\bar{v}_{\varepsilon }) +g(x,\bar v_{\varepsilon })\,dx
 \leq \liminf_{j\rightarrow
\infty }\int_{B_{R }}f(Dv_{\varepsilon ,k_{j}}) + g(x,v_{\varepsilon ,k_{j}})\,dx\\
&\leq
\lim_{j\rightarrow \infty }\int_{B_{R}}f_{k_{j}}(Du_{\varepsilon })+g(x,u_{\varepsilon})\,dx+\delta |B_{R}| \\
& =\int_{B_{R}}f(Du_{\varepsilon })+g(x,u_{\varepsilon})\,dx+\delta |B_{R}|
\end{split}%
\end{equation*}%
for every $\delta >0$ and then for  $
\delta \rightarrow 0$ 
\begin{equation}\label{Jensen}
\int_{B_{R}}f(D\bar{v}_{\varepsilon })+g(x,\bar v_{\varepsilon })\,dx\leq \int_{B_{R}}f(Du_{\varepsilon})+g(x,u_{\varepsilon})\,dx.
\end{equation}
We observe that, thanks to Jensen's inequality and the Dominated Convergence Theorem (see \cite{mar96} and \cite[Lemma 7.1]{EMMP}),  
\begin{equation}\label{Jensen2}
\lim_{\epsilon\to 0} \int_{B_{R}}f(Du_{\varepsilon})+g(x,u_{\varepsilon})\,dx=\int_{B_{R}}f(Du)+g(x,u)\,dx
\end{equation}
and hence the right hand side of \eqref{Jensen} is uniformly bounded w.r.t. $\epsilon$.
We apply once more de la Vall\'{e}e-Poussin Theorem to extract  a sequence $\varepsilon _{j}\rightarrow 0$ such that $\bar{%
v}_{\varepsilon _{j}}-u_{\varepsilon _{j}}\rightharpoonup \bar{v}-u$ in $%
W_{0}^{1,1}(B_{R})$. By the lower semicontinuity of the functional, \eqref{Jensen} and \eqref{Jensen2}
\begin{equation}
\begin{split}
& \int_{B_{R}}f(D\bar{v})+g(x,\bar{v})\,dx
 \leq \liminf_{j\rightarrow \infty }\int_{B_{R}}f(D\bar{v}_{\varepsilon _{j}})
   +g(x,\bar v_{\varepsilon _{j}})\,dx\\
&\leq \lim_{j\rightarrow \infty
}\int_{B_{R}}f(Du_{\epsilon _{j}})+g(x,u_{\varepsilon_j})\,dx
=\int_{B_{R}}f(Du)+g(x,u)\,dx.  \label{eq:min}
\end{split}
\end{equation}%
Then $\bar{v}$ is another minimizer for \eqref{funz-modello} with $\Omega
=B_{R} $. Moreover from \eqref{normepsilon} 
 we can also assume that  $\{\bar{v}_{\varepsilon _{j}}\}_{j}$ is weakly$
^{\ast }$ convergent to $\bar{v}$ in $W^{1,\infty }(B_{\rho })$ for every $%
0<\rho <R$.
Therefore, thanks to \eqref{normepsilon} 
 and \eqref{Jensen2},   we have that  for every $0<\rho<R$  
\begin{equation}
\begin{split}
&\Vert D\bar{v}\Vert _{L^{\infty}(B_{\rho }; \mathbb{R}^n)} \leq \,\liminf_{j\rightarrow
\infty }\Vert D\bar{v}_{\varepsilon _{j}}\Vert _{L^{\infty }(B_{\rho }; \mathbb{R}^n)} \\
& \leq
\lim_{j\rightarrow \infty }\,C\,\left\{ \frac{1}{(R-\rho )^{n}}
\left (\int_{B_{R}}1+f(Du_{\varepsilon _{j}})+g(x,u_{\epsilon_j})\,dx +\tilde K \right )\right\}^{\theta } \\
& =\,C\,\left\{ \frac{1}{(R-\rho )^{n}} \left (\int_{B_{R}} f(Du)+g(x,u)\,dx +\kappa \right )\right\}
^{\theta },
\end{split}
\label{eq:lips}
\end{equation}%
where $\kappa = \tilde{K} + |B_R|.$
\\
Since $\bar{v}$ and $u$ are two different minimizers of $F$ in $B_{R}$ and $f(\xi )$ is strictly convex for $|\xi |>t_{0}$, by proceeding as in 
\cite{EMM} it is possible to prove that the set 
\begin{equation*}
E_{0}:=\left\{ x\in B_{R}:\left\vert \frac{Du(x)+D\bar{v}(x)}{2}\right\vert
>t_{0}\right\}\cap\{Du\neq D\bar{v}\}.
\end{equation*}%
has zero measure. Therefore 
\begin{equation*}
\Vert Du\Vert _{L^{\infty }(B_{\rho }; \mathbb{R}^n)}\leq \,\Vert Du+D\bar{v}\Vert
_{L^{\infty }(B_{\rho }; \mathbb{R}^n)}+\Vert D\bar{v}\Vert _{L^{\infty }(B_{\rho }; \mathbb{R}^n)}\leq
\,2t_{0}+\Vert D\bar{v}\Vert _{L^{\infty }(B_{\rho }; \mathbb{R}^n)}.
\end{equation*}

\section{Some additional results}
\label{cinque}

We conclude by presenting some additional results related to Theorem \ref{superlinear}. We start by the following theorem which is a slightly more general version of Theorem \ref{superlinear}, where the global Lipschitz continuity of $g,$ namely assumption (G1), is replaced by the following local Lipschitzianity
\begin{enumerate}
    \item[(G1)'] for every $M>0$, there exists $L(M)$ such that  $|g(x,\eta_1)-g(x,\eta_2)|\le L(M)|\eta_1-\eta_2|$ for a.e. in $x\in\Omega$ and for every $\eta_1,\eta_2\in [-M,M]$.
\end{enumerate}
In this case it turns out that  the constant $C$ in \eqref{infinity nu} depends also on
$\|u\|_{L^{\infty}(B_R)}$.

\begin{theo}
\label{uLocBounded} Let $u\in W_{loc}^{1,1}(\Omega)\cap L_{loc}^\infty(\Omega)$ be a local minimizer of the functional \eqref{funz-modello}. Suppose that $f$ satisfies the growth assumptions \textnormal{(F1)}--\textnormal{(F4)}, with the parameters $\alpha $, $\beta $, $\mu $ related by the
condition \eqref{ab}.
Assume moreover that $g$ fulfills assumptions \textnormal{(G1)'}-\textnormal{(G2)}-\textnormal{(G3)}-\textnormal{(G4)}.
\\
Then $u$  is locally Lipschitz continuous in $\Omega $ and it satisfies   estimate \eqref{infinity nu} as in Theorem {\rm \ref{superlinear}} where in this case the constant $C$ depends also on $\|u\|_{L^{\infty}(B_R)}.$
\end{theo}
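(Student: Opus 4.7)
The strategy is to reduce Theorem \ref{uLocBounded} to Theorem \ref{superlinear} by a Lipschitz truncation of the lower-order term $g$, exploiting the a priori local boundedness of $u$.

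First I would fix a ball $B_R\subset\subset\Omega$ and set $M:=\|u\|_{L^\infty(B_R)}$, which is finite by hypothesis. By (G1)', $g$ is Lipschitz in the second variable on $\Omega\times[-M,M]$ with some constant $L_M:=L(M)$. I would then construct a modification $\tilde g\colon\Omega\times\mathbb R\to\mathbb R$ that coincides with $g$ on $\Omega\times[-M,M]$ and is globally Lipschitz in $v$ with constant of the order of $L_M$; the natural choice is the convex Lipschitz extension that prolongs $g(x,\cdot)|_{[-M,M]}$ linearly beyond $\pm M$ with slopes $\pm L_M$, chosen large enough to preserve convexity at the junctions. Verification of (G2) and (G3) is immediate, and (G4) for $\tilde g$ can be checked case by case in the four regions determined by $u,v$ lying inside or outside $[-M,M]$, exploiting the monotonicity of $g^+_v$ in $v$ guaranteed by convexity together with (G4) for $g$ on $[-M,M]$ and the fact that the extension slope $L_M$ dominates $|g^\pm_v|$ on this interval.

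Next I would show that $u$ is a local minimizer of the modified functional $\tilde{\mathcal F}(v):=\int f(Dv)+\tilde g(x,v)\,dx$ on $B_R$. Since $u\in[-M,M]$ a.e., $\tilde g(x,u)=g(x,u)$ and therefore $\tilde{\mathcal F}(u)=\mathcal F(u)$. For any competitor $v\in u+W^{1,1}_0(B_R)$, by enlarging the ``good'' interval to $[-N,N]$ for $N$ sufficiently large (so that Lemma \ref{Lemmabound} applied to the Lipschitz function $\tilde g$ forces any minimizer of $\tilde{\mathcal F}$ with trace $u$ to stay in $[-N,N]$), one obtains
\[
\tilde{\mathcal F}(v)\ge\mathcal F(v)\ge\mathcal F(u)=\tilde{\mathcal F}(u),
\]
using local minimality of $u$ for $\mathcal F$ together with the inequality $\tilde g\ge g$ on the range of relevant competitors. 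Theorem \ref{superlinear}, applied to $u$ as a local minimizer of $\tilde{\mathcal F}$, then yields the estimate \eqref{infinity nu} with $C$ depending on the Lipschitz constant of $\tilde g$, hence on $L_M=L(M)$ and thus on $\|u\|_{L^\infty(B_R)}$, as claimed.

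The main obstacle I anticipate is the calibration of the truncation: the extension level $N$ and slope must be chosen so that $\tilde g$ is simultaneously globally Lipschitz, dominates $g$ on the range of admissible competitors, and still satisfies (G4) exactly. This is delicate because $g$ may grow faster than linearly for $|v|$ large, so $\tilde g\ge g$ can only be secured after an a priori $L^\infty$-bound on the approximating minimizers of Section \ref{quattro} (via Lemma \ref{Lemmabound}) is built in. An alternative, possibly cleaner, route is to reformulate the minimization as an obstacle problem with the constraint $|v|\le N$, for which $u$ is automatically admissible and the approximation-plus-barrier scheme of Section \ref{quattro} can be carried out verbatim on the free region.
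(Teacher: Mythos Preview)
The paper's proof is a two–line reduction: it simply asserts that, once one restricts to $B_R$ and sets $M=\|u\|_{L^\infty(B_R)}$, the proof of Theorem \ref{superlinear} goes through verbatim with $L$ replaced by $L(M)$. In other words, the paper does \emph{not} modify $g$ at all; it re–runs Section \ref{quattro} observing that every occurrence of the global Lipschitz bound is in fact applied only to functions whose range is controlled by $M$, so (G1)$'$ is already enough.

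Your truncation route is more elaborate and has a genuine gap. If $\tilde g$ is obtained by extending $g(x,\cdot)|_{[-M,M]}$ linearly with slopes bounded by $L_M$, then convexity of $g$ forces $\tilde g\le g$ outside $[-M,M]$, not $\tilde g\ge g$. Hence the displayed chain $\tilde{\mathcal F}(v)\ge\mathcal F(v)\ge\mathcal F(u)=\tilde{\mathcal F}(u)$ is wrong in its first inequality, and you cannot conclude that $u$ is a local minimizer of $\tilde{\mathcal F}$. You flag this, but both proposed fixes are problematic: invoking Lemma \ref{Lemmabound} to cap the competitors is circular (that lemma is stated under (G1), which is exactly what you are trying to manufacture, and the bound it produces depends on the Lipschitz constant, which depends on the truncation level, which depends on the bound), and the obstacle–problem reformulation is only sketched.

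What makes the paper's terse argument actually work is (G3). Convexity of $v\mapsto g(x,v)$ makes $g_v^+$ monotone in $v$; thus $g_v^+(x,v)\le g_v^+(x,M)\le L(M)$ for all $v\le M$, and $g_v^+(x,v)\ge g_v^-(x,-M)\ge -L(M)$ for all $v\ge -M$, with no global assumption needed. In the barrier comparisons of Lemma \ref{comparison} one only tests on the set where the minimizer lies below (resp.\ above) a barrier that is itself $\le M$ (resp.\ $\ge -M$) by construction, so the one–sided bound $L(M)$ suffices there; the same observation covers the use of $|g_u|\le L$ in Theorem \ref{stimapriori}, since it is applied to the already $L^\infty$–bounded $v_{k,\varepsilon}$. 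This is the content hiding behind the paper's ``straightforward consequence'', and it bypasses the need for any truncation of $g$.
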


\begin{proof}\  The result is a straighfoward consequence of Theorem \ref{superlinear}: in fact it is sufficient to consider $\|u\|_{L^{\infty}(B_R)}$ with $B_R$ instead of $\Omega$ and to observe that $g$ satisfies (G1)' with $M = \|u\|_{L^{\infty}(B_R)}.$
\end{proof}

\bigskip

The next theorem is obtained by considering  functionals of type \eqref{funz-modello} in the space $u_0+W^{1,1}_0(\Omega),$ where $u_0$ is a fixed boundary datum.

\begin{theo}
\label{uBounded} Let $u_0\in W^{1,1}(\Omega)\cap L^{\infty}(\Omega)$ and $u $ be a minimizer of the functional \eqref{funz-modello} in the class $u_0+W^{1,1}_0(\Omega)$. Suppose that $f$ satisfies the growth assumptions \textnormal{(F1)}--\textnormal{(F4)}, with the parameters $\alpha $, $\beta $, $\mu $ related by the
condition \eqref{ab}.
Assume moreover that $g$ fulfills assumptions \textnormal{(G1)'}-\textnormal{(G2)}-\textnormal{(G3)}-\textnormal{(G4)}.
\\
Then $u$  is locally Lipschitz continuous in $\Omega $ and it satisfies   estimate \eqref{infinity nu} as in Theorem {\rm \ref{superlinear}} where, this time, the constants $C$ and $\kappa$ depend also on $\|u_0\|_{L^\infty(\Omega)}$.
\end{theo}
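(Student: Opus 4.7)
The plan is a short reduction to Theorem \ref{uLocBounded}. The novelty with respect to that theorem is that here $u$ is not assumed to be locally bounded a priori; instead, the $L^{\infty}$ information must be extracted from the boundary datum $u_0 \in L^{\infty}(\Omega)$. I would proceed in two preliminary steps, after which the conclusion is immediate.

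First, I would establish the global bound $\|u\|_{L^{\infty}(\Omega)} \le \bar M$ with $\bar M$ controlled by $L$, $f^*$, and $\|u_0\|_{L^{\infty}(\Omega)}$. This is a direct application of Lemma \ref{comparison} with $A = \Omega$ and boundary datum $\varphi = u_0$: the function $f$ is superlinear by \eqref{remsuperlinarity} (so the polar $f^*$ is finite on $\mathbb{R}^n$, as stated in Proposition \ref{polare}), and the Cellina-type barriers $\omega_{\pm L}$ constructed from $f^*$ can be compared to $u$ through the comparison principle of \cite[Theorem 2.4]{FT} and \cite[Theorem 2.4]{GT}, whose applicability is guaranteed precisely by the monotonicity assumption (G4). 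Tracking the constants in the barriers yields an explicit dependence of $\bar M$ on $\|u_0\|_{L^{\infty}(\Omega)}$.

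Second, I would verify that the global minimizer $u$ is automatically a local minimizer in the sense of the introduction. Given $\Omega' \subset\subset \Omega$ and any $\varphi \in W_0^{1,1}(\Omega')$, its extension by zero lies in $W_0^{1,1}(\Omega)$, so $u+\varphi \in u_0 + W_0^{1,1}(\Omega)$ is an admissible competitor; since $u$ and $u+\varphi$ coincide outside $\Omega'$, the global Dirichlet inequality collapses to the local minimality inequality on $\Omega'$. One also checks that the $L^1_{\mathrm{loc}}$-integrability requirement for $f(Du) + g(\cdot,u)$ follows by testing global minimality against $u_0$, provided (as is standard and implicit in the assumption that a minimizer exists) the initial energy $\int_{\Omega} f(Du_0) + g(x,u_0)\,dx$ is finite.

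With these two facts in hand, Theorem \ref{uLocBounded} applies to $u \in W^{1,1}_{\mathrm{loc}}(\Omega) \cap L^{\infty}_{\mathrm{loc}}(\Omega)$ and delivers the Lipschitz estimate \eqref{infinity nu}, whose constants $C$ and $\kappa$ depend on $\|u\|_{L^{\infty}(B_R)} \le \bar M$ and therefore ultimately on $\|u_0\|_{L^{\infty}(\Omega)}$, as required. There is no substantial obstacle beyond the first step; the only delicate issue is ensuring that the comparison principle is invoked in a form compatible with the $u$-dependent lower-order term, which is exactly what assumptions (G3)--(G4) and the results of \cite{FT, GT} were designed to accommodate.
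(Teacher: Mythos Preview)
Your proposal is correct and follows essentially the same route as the paper: apply Lemma~\ref{comparison} (the Cellina-type barrier argument) to obtain an $L^\infty$ bound on $u$ in terms of $\|u_0\|_{L^\infty(\Omega)}$, and then invoke Theorem~\ref{uLocBounded}. Your additional remarks on local minimality and the finite-energy requirement are accurate fill-ins of details the paper leaves implicit; the only minor slip is that Lemma~\ref{comparison} as stated relies on the Lipschitz hypothesis (G1) rather than (G4), though the underlying comparison theorems from \cite{FT,GT} do use the convexity/monotonicity structure you cite.
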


\begin{proof}\ We apply Lemma \ref{comparison} to get an $L^{\infty}$ bound for the minimizer and we proceed then as in the previous theorem. 
\end{proof}

\bigskip

\begin{rem}
We notice that any function $g(x,u)$ such that $g(x,u)=g(u)$ satisfying assumption (G3), fulfills assumption (G1)', (G2) and (G4), therefore the only assumption required, in this case, is the convexity.
\\
On the other hand, assumption (G1)' allows us to consider also significant cases for applications. For instance we can deal with functionals modelling the elastoplastic torsion, where
$$g(x,u)=(\lambda u-a(x))u$$
with $a(x)\in W^{1,\infty}(\Omega)$ and $\lambda >0$, or the
reconstruction of an image $u$ from a degraded data $a(x)$, where
$$g(x,u)=|a(x)-\lambda u|^2, \, \,\,\, \, \,a(x) \in C^1(\bar \Omega),\, \lambda\in \R.$$
\end{rem}

We conclude this section by considering the case of radially symmetric lagrangian $f(\xi) = h(|\xi|),$ for a given function $h$. In this case, as we already remarked, condition \eqref{ab} is always satisfied. 

\begin{theo}
\label{radial-symmetric}
  Let $u\in W_{loc}^{1,1}(\Omega)\cap L_{loc}^\infty(\Omega)$ be a local minimizer of the functional \eqref{funz-modello}. Suppose that $f(\xi)=h(|\xi|)$ where $h$ is non negative, convex, increasing, superlinear and $h\in \mathcal{C}([0,+\infty))\cap\mathcal{C}^2([t_0,+\infty))$ for a suitable $t_0>0$. We also assume that there exist $\mu\in [0,1]$, $\beta\in \left(0,\frac{2}{n} \right)$ and a positive constant $C$ such that, for every $t\ge t_0$
  \begin{enumerate}
  \item[i)] $\displaystyle h''(t)\le \frac{h'(t)}{t}$; 
      \item[ii)] $t \mapsto h'(t)t^{\mu-1}$ is decreasing;
      \item[iii)] $\displaystyle h''(t)\ge \frac{c}{t^{\mu\frac{2}{2^*}+2\beta}},$
  \end{enumerate}
  where, as before, $2^* = \frac{2n}{n-2}$ if $n \ge 3$ while in the case $n=2$, it must be replaced with any fixed positive number greater than $\frac{2}{1 - \beta}$. 
  Assume moreover that $g$ fulfills assumptions \textnormal{(G1)}-\textnormal{(G2)}-\textnormal{(G3)}-\textnormal{(G4)}.
\\
Then $u$  is locally Lipschitz continuous in $\Omega $ and it satisfies   estimate \eqref{infinity nu} as in Theorem {\rm \ref{superlinear}}.
\end{theo}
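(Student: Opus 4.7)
The plan is to apply Theorem \ref{superlinear} to $f(\xi)=h(|\xi|)$ by verifying (F1)--(F4) with the natural choice $h_1(t):=h''(t)$, $h_2(t):=h'(t)/t$, and an exponent $\alpha$ in (F4) arbitrarily close to $1$, which makes the gap condition \eqref{ab} automatic.

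First, the Hessian of $f(\xi)=h(|\xi|)$ has the explicit form
\[
D^{2}f(\xi)=h''(|\xi|)\,\frac{\xi\otimes\xi}{|\xi|^{2}}+\frac{h'(|\xi|)}{|\xi|}\Bigl(I-\frac{\xi\otimes\xi}{|\xi|^{2}}\Bigr),
\]
whose eigenvalues are $h''(|\xi|)$ in the direction $\xi$ and $h'(|\xi|)/|\xi|$ with multiplicity $n-1$ in the orthogonal directions. Assumption i) gives the ordering $h''\le h'/t$, and (F1) follows at once with $h_{1}=h''$, $h_{2}=h'/t$. Assumption ii) is precisely the decreasing part of (F2), namely that $t^{\mu}h_{2}(t)=h'(t)\,t^{\mu-1}$ is decreasing. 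For (F3), integrating ii) yields the upper bound $h'(t)\le C\,t^{1-\mu}$ for $t\ge t_{0}$, whence $(h_{2}(t))^{2/2^{\ast}}\le C\,t^{-2\mu/2^{\ast}}$; combined with the lower bound iii) on $h''$ this gives $(h_{2}(t))^{2/2^{\ast}}\le C'\,t^{2\beta}h_{1}(t)$, which is (F3).

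For (F4), the key point specific to the radial situation is that $h'(t)\,t$ is already comparable to $h(t)$. Indeed, ii) gives $h'(s)\ge h'(t)(s/t)^{1-\mu}$ for $t_{0}\le s\le t$, and integrating on $[t_{0},t]$ one obtains $h(t)\ge c\,h'(t)\,t$ for $t$ large. Hence $h'(t)\,t\le C\,(1+h(t))$, which is (F4) with any $\alpha>1$; I choose $\alpha=1+\epsilon$ for $\epsilon>0$ to be fixed. With this choice the left-hand side of \eqref{ab} becomes $2-n\beta-\epsilon(n\beta-\mu)$, which is strictly positive for $\epsilon$ small enough since $\beta<2/n$. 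An application of Theorem \ref{superlinear} then delivers the local Lipschitz continuity and the estimate \eqref{infinity nu}.

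The main obstacle in the scheme above is the second half of (F2), namely the monotonicity of $t\mapsto t\,h_{1}(t)=t\,h''(t)$: this is \emph{not} automatic under i)--iii) alone, as the very slow growth example $h(t)=(1+t)L_{k}(t)$ for $k\ge 2$ illustrates. To handle this I would either replace $h_{1}$ by the explicit power lower bound $\tilde h_{1}(t):=c\,t^{-2\mu/2^{\ast}-2\beta}$ supplied by iii) (which preserves (F1) and (F3) and for which $t\tilde h_{1}(t)$ is a pure power, so Lemma \ref{g1eg2} applies whenever the exponent is non-negative), or specialize the a priori estimate of Theorem \ref{stimapriori} to the radial case, where the one-dimensional structure of the minimizer reduces the Caccioppoli-type inequality to a scalar differential inequality for $h'(|Du|)$ and the monotonicity of $t\,h_{1}(t)$ is no longer needed. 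Either route closes the argument and yields the stated Lipschitz estimate.
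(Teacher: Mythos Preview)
Your reduction to (F1)--(F4) with $h_{1}=h''$, $h_{2}=h'/t$ is exactly right, and you correctly isolate the only obstruction: the increasing monotonicity of $t\mapsto t\,h_{1}(t)=t\,h''(t)$ fails in general (your $L_{k}$ example is well chosen). However, neither of your proposed fixes closes the gap. The first---replacing $h_{1}$ by the power $\tilde h_{1}(t)=c\,t^{-2\mu/2^{*}-2\beta}$---requires $t\tilde h_{1}(t)$ to be nondecreasing, i.e.\ $2\mu/2^{*}+2\beta\le 1$, and this is \emph{not} implied by $\mu\in[0,1]$, $\beta<2/n$ (for $n=3$ the bound on $2\mu/2^{*}+2\beta$ is only $<5/3$). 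The second fix is not a proof: there is no ``one-dimensional structure of the minimizer'' in the radial-lagrangian case, and the Caccioppoli inequality remains genuinely $n$-dimensional.

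The paper's resolution is both simpler and complete. The observation is that in the proof of Lemma~\ref{g1eg2} and Theorem~\ref{stimapriori} the hypothesis ``$t\,h_{1}(t)$ increasing'' can be replaced throughout by ``$t\,h_{2}(t)$ increasing'' with only cosmetic changes; and $t\,h_{2}(t)=h'(t)$ is increasing simply because $h$ is convex. This yields the a~priori estimate with $\alpha=1$, so \eqref{ab} reduces to $2-n\beta>0$, automatic from $\beta<2/n$. There is one further place you overlooked: the barrier construction in Step~2 of Theorem~\ref{superlinear} also invokes (F2) to force $t\,h_{1}(t)\ge\delta$ for large $t$, in order to bound the curvatures of $\partial\Omega_{\kappa_{z}^{-},c}$. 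In the radial setting the paper bypasses this by computing the curvature directly as $\bigl(h''_{k}(s)s^{3}\bigr)^{-1}$ with $s=|(h_{k}^{*})'(\tfrac{L}{n}|x|)|$, and then uses iii) together with $2\mu/2^{*}+2\beta<3$ to get $h''_{k}(s)s^{3}\to\infty$.
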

\begin{proof}\ 
First of all we notice that, following the same notation of assumptions (F1)--(F4) and recalling i) and  \cite[equation (3.3)]{MP}, we have that (F1) holds with
\[
h_1(t)=h''(t)\quad\text{and}\quad h_2(t)=\frac{h'(t)}{t}.
\]
We remark that, as for Lemma \ref{g1eg2}, also Theorem \ref{stimapriori} still holds  assuming, in (F2), $t\mapsto t h_2(t)$ is  increasing instead of $t\mapsto t h_1(t)$ is increasing. The convexity of $h$ implies $t\mapsto t h_2(t)$ is  increasing and the first condition in (F2) is satisfied by ii). 

On the other hand, from assumption ii), we infer the existence of a constant $C > 0$ such that
\[
h'(t) \le \, \frac{C}{t^{\mu - 1}}.
\]
Therefore
\begin{eqnarray*}
[h_2(t)]^{\frac{2}{2^*}} &=& \left [\frac{h'(t)}{t} \right]^{\frac{2}{2^*}} \le \, \left [ \frac{C}{t^{\mu}}\right ]^{\frac{2}{2^*}} \le  \, C^{\frac{2}{2^*}} \, t^{2 \beta} \frac{1}{t^{\mu + \frac{2}{2^*} + 2 \beta}} \stackrel{\textnormal{iii)}}{\le} \frac{C^{\frac{2}{2^*}}}{c} t^{2 \beta}h''(t) \\
&=& C_1 t^{2 \beta} h_1(t)
\end{eqnarray*}
and (F3) holds. Finally, it is sufficient  to show that (F4) holds for $\alpha=1$. 

We have
\begin{eqnarray*}
h(t) - h(t_0) &=& \int_{t_0}^t h'(s)  ds = \int_{t_0}^t \frac{h'(s)}{s^{\mu - 1}} \, s^{\mu - 1}  ds \stackrel{\textnormal{ii)}}{\ge}  h'(t) t^{\mu - 1} \int_{t_0}^t s^{1 - \mu}  ds\\
&=& 
 \frac{1}{2 - \mu} h'(t) t - \frac{1}{2 - \mu} h'(t) t^{\mu - 1} t_0^{2 - \mu}\stackrel{\textnormal{ii)}}{\ge} \frac{1}{2 - \mu} h'(t) t - \frac{1}{2 - \mu} h'(t_0) t_0
\end{eqnarray*}
therefore
\[
h'(t) t \le \, (2 - \mu) h(t) - (2 - \mu) h(t_0) + h'(t_0) t_0 \le \, C [h(t) + 1].
\]
Summing up, all the assumptions of Theorem \ref{stimapriori} hold; in particular, being $\alpha = 1$, \eqref{ab} is always satisfied, being equivalent to ask that $\beta < \frac{2}{n}$ so that the a priori estimate holds true. 
It remains to discuss the proof of Theorem \ref{superlinear} in our setting.
\\
\\
 Also in this case we approximate the function $h$ with a sequence of functions $h_k$ satisfying (i), (ii) and (iii) with a constant $c$ independent of $k$. We need to remark that the functions $f_k(\xi)=h_k(|\xi|)$ belong to $ \mathcal{C}^2(\mathbb{R}^n)$ and are locally uniformly convex in $\R^n$.

From now on the proof follows the same ideas of the proof of Theorem \ref{superlinear}. We only underline that the computation of the curvatures of the boundary of the set

\[
\Omega_{\kappa_z^-, c}=\left \{\frac{n}{L} h_k^*\left (\frac{L}{n} |x| \right) - \kappa_z^- \cdot x - c \le 0\right \} 
\]
can be estimated, as in Step 2 of the proof of  \cite[Theorem 4.3]{FT}, by
\begin{equation}\label{curvatura-bis}
\frac{|(h^*_k)''(\frac{L}{n} |x|)|}{|(h^*_k)'
(\frac{L}{n} |x|)|^3} = \frac{1}{h_k''(|(h^*_k)'(\frac{L}{n}|x|)|)|(h^*_k)'(\frac{L}{n}|x|)|^3}.
\end{equation}
At this point, iii) and the fact that $\mu\frac{2}{2^*}+2\beta<3$, yields that
\[
\lim_{t\to+\infty} h''_k (t) t^3 =+\infty
\]
which allows us to infer the existence of $\bar t$ such that $h''_k (t) t^3 \ge \delta>0$ for every $t\ge \bar t$. It follows that if $c$ is sufficiently large, the principal curvatures of $\Omega_{\kappa_z^-, c}$ are less or equal to $\frac{1}{\delta}$ and therefore it is now possible then to conclude as in Step 3 of Theorem \ref{superlinear}.
\end{proof}

\bigskip

\textbf{Data availability}

No datasets were generated or analysed during the current study.

\bigskip

\textbf{Acknowledgments} The authors are indebted to Prof. Giuseppe Mingione for having suggested the problem.

The authors have been partially supported by the Gruppo
Nazionale per l’Analisi Matematica, la Probabilità e le loro Applicazioni (GNAMPA) of the Istituto
Nazio-nale di Alta Matematica (INdAM), through the projects ``Prospettive nelle scienze dei materiali: modelli variazionali, analisi asintotica e omogeneizzazione'' (coordinator E. Zappale) and ``Su alcuni problemi di regolarit\`a del Calcolo delle Variazioni con convessit\`a degenere'' (coordinator F. Giannetti). Moreover M. Eleuteri and S. Perrotta have been partially supported by PRIN 2020 ``Mathematics for industry 4.0 (Math4I4)'' (coordinator P. Ciarletta) and G. Treu has been partially supported by Unipd project DOR2340044.

\bigskip


\end{document}